\newtheorem{dfn}{Definition}
\newtheorem{cla}[dfn]{Claim}
\newcommand{\argmin}{\mathop{\rm arg~min}\limits}
\newcommand{\argmax}{\mathop{\rm arg~max}\limits}
\newcommand{\trans}{\mathsf{T}}
 \newcommand{\xR}{\mathbb{R}}
 \newcommand{\xN}{\mathbb{N}}
 \algrenewcommand{\Return}{\State\algorithmicreturn~}
 \newcommand{\busub}[1]{\noindent \underline{\textbf{#1}}\\}
\title{Sample-Cluster-Select: A new framework to obtain diverse approximate solutions of combinatorial optimization problems\thanks{This work was supported by JSPS KAKENHI Grant Number 24K17472, MEXT Quantum Leap Flagship Program (MEXT Q-LEAP) Grant Number JPMXS0118069605, and MEXT KAKENHI Grant Number 20H05962.}} 
\author{Susumu Hashimoto, Takeaki Uno\\
    National Institute of Informatics\\
    \url{hashimoto_s@nii.ac.jp}, \url{uno@nii.ac.jp}}
\date{\today}
\begin{document}
\maketitle
\begin{abstract}
  When solving real-world problems, practitioners often hesitate to implement solutions obtained from mathematical models, especially for important decisions.
  This hesitation stems from practitioners' lack of trust in optimization models and computational results.
  To address these challenges, we propose Sample-Cluster-Select (SCS) for solving practical combinatorial optimization problems under the assumption of potentially acceptable solution set.
  SCS first samples the potential solutions, performs clustering on these solutions, and selects a representative solution for each cluster.
  SCS aims to build trust by helping users understand the solution space through multiple representative solutions, while simultaneously identifying promising alternatives around these solutions.
  We conducted experiments on randomly generated instances, comparing SCS against multi-start local search and $k$-best algorithms where efficient methods exist, or evolutionary algorithms otherwise.
  The results show that SCS outperforms multi-start local search and $k$-best algorithms in most cases, while showing competitive performance against evolutionary algorithms, though not surpassing some of their variants.
  Most importantly, we found that the clustering approach provides insights into solutions that are difficult to obtain with existing methods, such as local structures of similar potential solutions and neighboring solutions of representative solutions.
  Thus, our approach helps practitioners understand the solution space better, thereby increasing their confidence in implementing the proposed solutions.
\end{abstract}

    \section{Introduction}
    \label{sec:intro}
    Combinatorial optimization models can express various real-world decision-making situations.
    Since many classical combinatorial optimization problems and their variants are $NP$-hard, high-performance heuristics, such as Evolutionary Algorithm (EA), have been developed to obtain optimal or suboptimal solutions.
    However, users or practitioners often hesitate to directly implement these solutions, even though multiple diverse solutions are prepared.
    This hesitation stems from practitioners' lack of trust in optimization models and computational results.
    To overcome this challenge, we propose an approach that not only finds multiple high-quality solutions but also reveals the characteristics of neighboring solutions around them, thereby demonstrating to practitioners the range of possible solutions.
    Before presenting our approach in detail, let us discuss the root causes of this lack of trust.
    Specifically, we give three critical issues:
    \begin{enumerate}[label=Issue \arabic*,ref=\arabic*]
        \item The optimal solutions of the model may not be acceptable to the user. \label{iss:not_opt}
        \item Even when an acceptable solution is obtained, it may be unclear why it is favorable. \label{iss:blackbox}
        \item The user might suspect that there are better solutions other than the candidates presented. \label{iss:suspect}
    \end{enumerate}

    First, we elaborate on Issue \ref{iss:not_opt}. Consider the problem of finding the route for cycling from the workplace to home. Suppose there are two options: a 1 km shortest path and a 1.5 km detour. Without additional information, the user would likely select the shortest path. However, the user might prefer the longer route if the shortest path involves steep hills or if the detour passes by a supermarket.
    Unless the objective function almost expresses the user's preferences, solutions that are not (sub)optimal may still be acceptable for the user.
    Conversely, not all feasible solutions are necessarily appealing.
    For instance, a route that significantly deviates from and returns near the starting point, resembling the Greek letter $\Omega$, is unlikely to be preferable for the user. If such a route is often considered satisfying, then modeling the problem as a shortest path problem may be inappropriate.

    Regarding Issue \ref{iss:blackbox}, while metaheuristics like EA can quickly obtain (sub)optimal solutions, their mechanism is opaque and lacks interpretability, making it difficult for users to trust the solutions.
    This black-box nature of the algorithm obscures the reasoning behind why certain solutions are considered favorable.
    Furthermore, even when we get a satisfying solution, we miss opportunities to gain valuable insights such as which variables should be set to 1, the relationships between variable assignments (e.g., $x_A=1$ implies that setting $x_B$ to 1 is better), or information about the alternative when the prepared solutions become infeasible.
    Such knowledge not only enhances the credibility of the solutions by providing a clear understanding of their underlying structure, but also helps us find acceptable solutions when circumstances have changed and the original solutions are no longer applicable.

    Issue \ref{iss:suspect} also leads to the problem that users are reluctant to trust the provided solutions.
    For example, consider buying a house through an online real estate service.
    While such services typically aim to maximize the probability of purchase by recommending properties ranked according to various preferences with user-specified constraints, users rarely decide based solely on these top-ranked recommendations.
    Instead, they may review lower-ranked options and try different search conditions to explore other available properties before deciding.
    This behavior implies that presenting only multiple solutions does not necessarily lead to user satisfaction or immediate selection from among the options, especially for important decision-making.

    In this paper, we study an optimization problem that takes into account Issue \ref{iss:not_opt} and present an algorithmic framework to overcome Issues \ref{iss:blackbox}--\ref{iss:suspect}.
    The essence of our proposal is sampling numerous potential solutions and aggregating them to reveal practical characteristics of good solutions.
    Suppose that the original optimization problem is as follows:
    \begin{align}
        \text{minimize} \quad &f(x) \nonumber\\
        \text{subject to} \quad &x \in \mathcal{F}.
        \label{eq:original}
    \end{align}
    To address our issues, we assume a set of solutions $\mathcal{G} \subseteq \mathcal{F}$ that are potentially to be acceptable to users.
    Then, we consider the following problem:
    \begin{align}
        \text{maximize} \quad &D(X) \nonumber\\
        \text{subject to} \quad &X \subseteq \mathcal{G},
        \label{eq:main}
    \end{align}
    where $D: 2^{\mathcal{G}} \to \xR_{+}$ represents a function evaluating the diversity of a solution set.
    Intuitively, Problem \eqref{eq:main} aims to find the most diverse potentially acceptable solution set.

    Then, this paper proposes a framework called Sample-Cluster-Select (SCS) algorithm to solve \eqref{eq:main}.
    SCS consists of three main steps: (i) random sampling of potentially acceptable solutions, (ii) clustering the sampled solutions by their similarity, and (iii) selecting representative solutions, one from each cluster.
    Aggregating good solutions within a cluster can reveal potential structures around the representative solution, addressing Issues \ref{iss:blackbox} and \ref{iss:suspect}.

    This paper uses local optimal solutions as the potential solutions in the experiment and employ a local search algorithm as the sampling method in (i).
    Local optimal solutions are sometimes used in practice because they can easily obtain reasonably good solutions in typical neighborhoods, and besides, the concept is intuitive to users.
    Thus, they are suitable for our purpose.
    In (ii), we employ a micro-clustering algorithm by data polishing \cite{Uno2017}.
    This algorithm is appropriate for SCS because it clusters only similar solutions and allows outliers or unlabeled solutions.

    The performance and the above feature of SCS are examined by numerical experiments for random instances of shortest path problem (SP), traveling salesman problem (TSP), and set packing problem (SPP).
    Since the most appropriate diversity function cannot be defined, we evaluate the algorithm's performance using several measures, such as solution dispersions and coverage indicators for other solutions.
    Additionally, we demonstrate that superimposing solutions within clusters can yield insights that are unattainable through existing methods.
    We utilize positive-weighted SP with grid graph, SP with Euclidean unit disk graph, Euclidean TSP, and rectangle SPP as benchmark problems. These problems allow an intuitive visualization of the aggregated solutions for evaluating our framework against Issues \ref{iss:blackbox}--\ref{iss:suspect}. We employ rectangle SPP over conventional SPP because it represents solutions as rectangles on a grid graph, making them easy to interpret.

    Next, we introduce the structure of this paper.
    Section \ref{sec:prev} summarizes the related previous research.
    Section \ref{sec:probls} denotes the problem descriptions and local search algorithms used in the experiments as SCS's sampling methods.
    The experimental results are shown in Section \ref{sec:exp}, and Section \ref{sec:con} gives a conclusion.

    Finally, we give notations used in this paper.
    This paper denotes $\xN$ by the set of natural numbers, $\xR$ by the set of real numbers, $\xR_{+}$ by the set of non-negative real numbers, and $\xR_{++}$ by the set of positive real numbers. $\lceil r \rceil$ represents the smallest integer greater than or equal to $r\in \xR$. $(e_{ij})_{ij}$ denotes the matrix whose $(i, j)$-th element is $e_{ij}$.

    \section{Previous research}
    \label{sec:prev}
    This section describes the previous research related to this paper.
    First, this section reviews the existing problem settings that require multiple solutions to combinatorial optimization problems for the examined problems.
    The $k$-best solution problems aim to enumerate $k$ best optimal solutions for arbitrary integer $k$.
    Yen \cite{Yen1970} establishes a $O(k|V|(|E| + |V| \log |V|))$-time algorithm for $k$-best SP with non-negative edge weights.
    Hershberger et al. \cite{Hershberger2007} also propose a $O(k|V|(|E| + |V|\log |V|))$-time algorithm for SP with non-negative edge weights, which runs in $O(k(|E| + |V|\log |V|))$-time in optimistic cases.
    Similar to the $k$-best solution problems, multi-solution problems are also studied.
    This problem requires all the optimal solutions.
    As a related study, Huang et al. \cite{Huang2019} propose a Niching Memetic Algorithm for multi-solution TSP.
    Similar to the multi-solution problem, Ulrich and Thiele \cite{Ulrich2011} investigate the diversity maximization problem with an objective barrier, which aims to find a solution set with maximum diversity while ensuring that all solutions' objective values are better than a given barrier, and proposes NOAH framework of EA algorithms.
    Nikfarjam et al. \cite{Nikfarjam2021} apply EAX crossover, which is known as a strong EA operator for TSP, to NOAH and develop an algorithm that efficiently solves the diversity maximization problem of Euclidean TSP.
    Multi-objective optimization is well studied as another modeling approach to obtain diverse solutions.
    This problem has multiple objective functions, and the goal is to obtain representative solutions of the Pareto optimal solutions.
    Multi-objective problems of SP \cite{Maristany2021}, TSP \cite{George2020}, and SPP \cite{Delorme2010} are investigated.
    Sharma and Kumar \cite{Sharma2022} give a detailed survey of this type of problem.

    Then, the paper briefly reviews the clustering methods applicable to SCS and discusses their validity.
    $k$-means \cite{MacQueen1967} is one of the most widely used clustering methods, and it labels all data under a given number of clusters.
    However, in the context of this paper, it is crucial to aggregate similar data or solutions; therefore, labeling all the data is undesirable.
    A well-known method for this purpose is DBSCAN \cite{Ester1996}.
    This method extracts dense subsets of data in terms of similarity to clusters while allowing for outliers. DBSCAN is an exclusive clustering method where each data point belongs to exactly one cluster. A limitation of this method is that the results depend on the data's indexing.
    Micro-clustering with data polishing \cite{Uno2017} has the same motivation as DBSCAN and solves the indexing problem. It detects approximate pseudo-cliques in the adjacency graph of the data, allowing for outliers and overlapping clusters.
    Cluster overlapping is advantageous for SCS because we aim to find common structures in dense potential solutions, and determining their label itself is not essential.
    Therefore, micro-clustering with data polishing is a suitable clustering method for SCS.

    Finally, we describe the differences between the proposal of this paper and the aforementioned existing approaches.
    First, $k$-best and multi-solution approaches require optimal solutions for the target problem to be obtained in a reasonable time. This requirement significantly limits their applicability, especially for $NP$-hard problems, to only small-scale instances.
    In contrast, SCS can be applied to any problem where potential solutions can be efficiently sampled.
    As for the diversity maximization problem, this has already been discussed in Section \ref{sec:intro}.
    While multi-objective optimization can handle multiple types of preferences, these preferences must be mathematically defined as objective functions.
    This requirement makes it unsuitable for handling tacit knowledge or personal preferences that cannot be explicitly formulated.
    In contrast, \eqref{eq:main} maximizes the diversity of potential solutions, expecting to find solutions that fortunately align with implicit preferences.
    Moreover, SCS provides clusters around each representative solution, enabling users to explore alternative solutions within promising clusters even when their representative solutions are unacceptable.

    \section{Proposed framework}
    \label{sec:scs}
    This section describes the proposed framework to solve Problem \eqref{eq:main}, Sample-Cluster-Select (SCS) algorithm.
    First, we give an overview of SCS.
    SCS literally consists of three steps: sampling, clustering, and selecting.
    The sampling step picks potential solutions.
    Then, the clustering step groups the solutions using the given dissimilarity function of solutions as a distance function.
    Finally, the selecting step chooses a solution from each cluster and outputs all of them.
    SCS has two advantages.
    First, it does not necessarily require complicated parameter tuning or algorithm design.
    Second, it can obtain new insights by sampling many solutions and clustering them.
    These points are shown in the numerical experiments.

    Next, this paper provides a detailed description of the framework configuration used in the experiments.

    \noindent \underline{\textbf{Input}}
    The inputs of SCS are as follows:
    \begin{itemize}
        \item $n\in \xN$, $0<\delta<1$, $\beta \in \xR_{++}$
        \item A potential solution set $\mathcal{G}$
        \item A dissimilarity function of the solutions $d : \mathcal{G}\times \mathcal{G}\to [0, 1]$
        \item A random sampling method from $\mathcal{G}$
        \item Parameters for micro-clustering
        \item A selection criterion used in the selection step
    \end{itemize}

    \noindent \underline{\textbf{Sampling step}}
    The sampling step samples $n$ potential solutions from $\mathcal{G}$.
    This paper utilizes a simple multi-start local search (based) algorithm or independent repetition of a randomized local search algorithm for the original problem \eqref{eq:original}.
    To sample various solutions, we employ the first improvement strategy, which updates the current solution whenever a better neighboring solution is found.

    \noindent \underline{\textbf{Clustering step}}
    The clustering step first calculates the dissimilarity or distance between each pair of the sampled solutions.
    Next, it constructs a graph with the obtained solutions as vertices and edges connecting pairs of similar solutions.
    Let $S = (S_1, \dots, S_{n})\in  \mathcal{G}^{n}$ be the set of solutions obtained by the sampling step.
    The adjacency graph $G = (V, E)$ of the solutions is calculated as follows:
    \begin{enumerate}[label=Step \arabic*., ref=\arabic*]
        \item $V = \{1, 2, \dots, n\}$.
        \item Enumerate all non-zero dissimilarities between pairs of solutions, and let $n_{+}$ be the number of such dissimilarities.
        \item Let $\tau$ be the distance of the $\lceil \delta n_{+} \rceil$-th smallest dissimilarity.
        \item $E = \{(i, j) \in V \times V : d(S_i, S_j) \le \tau\}$.
    \end{enumerate}

    Next, we cluster the solutions using the adjacency graph and the micro-clustering algorithm by data polishing \cite{Uno2017}.
    This algorithm enumerates all approximate pseudo-cliques in the graph as clusters. See Uno et al. \cite{Uno2017} for the details.
    Note that this algorithm allows the same node to be included in multiple clusters.

    \noindent \underline{\textbf{Selecting step}}
    Finally, we describe the selecting step.
    It aims to choose a solution from each cluster and outputs all of them.
    This step begins with two setup operations: (1) eliminating identical sampled solutions and (2) determining a single cluster assignment for solutions assigned to multiple clusters.
    For solutions with multiple assignments, we select the final cluster in a way that balances cluster sizes.
    Let $S = (S_1, S_2, \dots, S_{n}) \subseteq \mathcal{G}$ be the sampled solutions and $C = (C_1, C_2, \dots, C_{n_c}) \in (2^{S})^{n_c}$ be the clusters, where $n_c$ is the number of clusters.
    The setup method is as follows:
    \begin{enumerate}[label=Step \arabic*., ref=\arabic*]
        \item Remove all identical solutions except one representative
        \item Pick a solution that belongs to multiple clusters
        \item Pick one of the smallest clusters containing this solution
        \item Remove this solution from all other clusters
        \item If solutions with multiple assignments still exist, return to Step 2; otherwise, terminate
    \end{enumerate}

    Then, the selection method is described in Algorithm \ref{alg:selection}.
    \begin{algorithm}[H]
        \caption{Selection algorithm}\label{alg:selection}
        \begin{algorithmic}[1]
            \Require $S = (S_1, \dots, S_{n}) \in  \mathcal{G}^{n}$, $C = (C_1, C_2, \dots, C_{n_c}) \in (2^{S})^{n_c}$, $d : \mathcal{G} \times \mathcal{G} \to \xR_{+}$, a selection criterion
            \Ensure $S_{SEL} \subseteq S$
            \State {Compute $C^\prime = (C^\prime_1, C^\prime_2, \dots, C^\prime_{n_c})\in (2^{S})^{n_c}$ from $C$ by the setup method}
            \While{$\max_{1\le i\le n_c} |C^\prime_i| > 1$} \label{line:sel-start}
            \State $j \gets  \argmax_{1\le i\le n_c}|C^\prime_i|$
            \State {Pick a solution $\bar{S}$ from $C^\prime_i$ by the selection criterion}
            \State $C^\prime_i \gets C^\prime_i\setminus \{\bar{S}\}$
            \EndWhile \label{line:sel-end}
            \Return $\bigcup_{1\le i\le n_c} C^\prime_i$
        \end{algorithmic}
    \end{algorithm}

    Algorithm \ref{alg:selection} selects one of the biggest clusters and then greedy eliminates a solution from the cluster to maximize a certain diversity measure. It continues this process until every cluster has a single solution at most.
    Let $\tilde{C}$ be the selected biggest cluster and $\tilde{S}$ be the remaining solutions in the iteration.
    In the experiments, this paper considers the following three greedy selection criteria:
    \begin{enumerate}[label=(\alph*)]
        \item \label{sel:rem-min} Remove $\displaystyle \argmin_{S_i \in \tilde{C}} \min_{j \neq i} d(S_i, S_j) $
        \item \label{sel:rem-min-avg} Remove $\displaystyle \argmin_{S_i \in \tilde{C}} \sum_{S_j \in \tilde{S}} d(S_i, S_j) $
        \item \label{sel:rem-sp} Remove $\displaystyle \argmax_{S_i \in \tilde{C}} D_{SP}(\tilde{S}\setminus \{S_i\}) $,
    \end{enumerate}
    where $D_{SP}$ or the Solow-Polasky diversity \cite{Solow1994} is defined as below:
    \begin{dfn}
        Given a constant $\beta \in \xR_{++}$.
        Let ${\mathcal X}$ be a set, $n\in \xN$, $S=(S_1, S_2, \dots, S_{n}) \in \mathcal{X}^n$, and $d: {\mathcal X} \times {\mathcal X} \to \xR_{+} $ be a distance function.
        Then, the Solow-Polasky diversity $D_{SP}:2^{\mathcal X}\to \xR$ is defined by $D_{SP}(S) = {\bm 1}^\trans (D^\prime)^{-1} {\bm 1}$, where ${\bm 1} = (1, 1, \dots, 1)^\trans \in \xR^{n}$ and $D^\prime = (e^{-\beta d(S_i,S_j)})_{ij}$.
    \end{dfn}
   \ref{sel:rem-min} removes the solution with the minimum distance, \ref{sel:rem-min-avg} eliminates the solution with the minimum average distance to all the other remaining solutions, and \ref{sel:rem-sp} removes the solution that minimizes the decrease in the Solow-Polasky diversity of all the remaining solutions in the selected cluster. Ties are broken randomly.

    Note that Ulrich and Thiele \cite{Ulrich2011} give an algorithm to select arbitrary $k\le n$ from $n$ solutions by greedy eliminating solutions with Solow-Polasky diversity as the selection function in $O(n^3)$-time. Using this, we can also compute lines \ref{line:sel-start}--\ref{line:sel-end} in Algorithm \ref{alg:selection} with selection criterion \ref{sel:rem-sp} in $O(n^3)$-time.

    In summary, SCS is described in Algorithm \ref{alg:scs}.

    \begin{algorithm}[H]
        \caption{Sample-Cluster-Select Algorithm}\label{alg:scs}
        \begin{algorithmic}[1]
            \Require $n\in \xN$, $\mathcal{G}$, $d:\mathcal{G}\times \mathcal{G}\to [0,1]$, $0<\delta<1$, $\beta \in \xR_{++}$, a sampling method from $\mathcal{G}$, parameters for the micro-clustering algorithm, a selection criterion
            \Ensure $S_{SEL} \subseteq \mathcal{G}$
            \State {Sample $n$ solutions $S=(S_1, S_2, \dots, S_{n})$ by the sampling method}
            \State {Cluster $S$ into $C=(C_1, C_2, \dots, C_{n_c}) \in (2^S)^{n_c}$}
            \State {Select $S_{SEL} \subseteq S$ from $C$ by Algorithm \ref{alg:selection}}
            \Return $S_{SEL}$
        \end{algorithmic}
    \end{algorithm}

    \section{Problem and local search algorithm}
    \label{sec:probls}
    This section describes the combinatorial optimization problems and local search algorithms used in the experiments.
    The tested problem includes the shortest path problem, the traveling salesman problem, and the set packing problem.
    \subsection{Shortest path problem}

    Let $G=(V, E, w)$ be an undirected edge-weighted graph and given two vertices $s, t \in V$.
    Suppose each edge $e \in E$ has a positive weight $w(e) > 0$.
    Shortest Path problem (SP) aims to find an $s-t$ path with the minimum total weight of its edges.
    This paper assumes that the graph is simple and that no edge has zero weight.
    Since fast algorithms, such as Dijkstra's algorithm \cite{Dijkstra1959}, can solve this problem, no local search algorithm has been proposed.
    Therefore, we establish a new local search (based) algorithm for SP with positive weights.
    Define $\bar{E} = (V\times V) \setminus E$, $\mathcal{P}_G$ by the set of all paths on $G$, and $|p|$ by the length of a path $p$ or the sum of edge weights contained in $p$.
    Given a constant $\lambda > 1$.
    The overview of the proposed algorithm is as follows:
    \begin{enumerate}[label=Step \arabic*., ref=\arabic*]
        \item Compute the all-pairs shortest distances $d_{SD}: V \times V \to \xR_{+}$ and paths $P: V \times V \to \mathcal{P}_G$ on $G$.

        \item Construct a complete weighted graph $G^\prime=(V, V\times V, w^\prime)$ using $d_{SD}$.

        \item Run a local search-based algorithm for the relaxed SP problem on $G^\prime$ and get a path.

    \end{enumerate}
    This algorithm first computes the all-pairs shortest distances $d_{SD}: V \times V \to \xR_{+}$ on $G$ by Floyd-Warshall algorithm \cite{Floyd1962, Warshall1962}.
    Step 2 constructs a complete weighted graph $G^\prime=(V, V\times V, w^\prime)$ with $w^\prime(e) = \lambda d_{SD}(e)$ for all $e \in \bar{E}$ and $w^\prime(e) = w(e)$ otherwise.
    In Step 3, we define the neighborhood $N_{SP}(p)$ of an $s-t$ path $p = sv_1  \dots v_{k-1}t$ on $G^\prime$ at first.
    \begin{dfn}
        Let $p = sv_1 \dots v_{k-1}t$ be an arbitrary $s-t$ path on $G^\prime$, $v_0=s$, and $v_k=t$.
        Then we define $N_{SP}(p) = N_{SP}^1(p) \cup N_{SP}^2(p) \cup N_{SP}^3(p)$, where

        $N_{SP}^1(p) = \left\{sv_1 \dots v_{l-1}v_{l+1} \dots v_{k-1}t : 1 \le l \le k-1  \right\}$,

        $N_{SP}^2(p) = \left\{sv_1 \dots v_{l-1}uv_{l+1} \dots v_{k-1}t : 1 \le l \le k-1, u\in V \right\}$,

        and

        $N_{SP}^3(p) = \left\{sv_1 \dots v_{l-1}uv_{l} \dots v_{k-1}t : 1 \le l \le k, u\in V \right\}$.

    \end{dfn}
    The neighborhood $N_{SP}(p)$ consists of three types of $s-t$ paths: $N_{SP}^1(p)$ is a neighbor obtained by removing a vertex from $p$, $N_{SP}^2(p)$ is obtained by replacing a vertex in $p$ with another vertex, and $N_{SP}^3(p)$ is a neighbor obtained by inserting a vertex into $p$.
    Note that $N_{SP}(p)$ may contain trails even if $p$ is simple.

    Then we show the local search-based algorithm and its subroutine in Algorithms \ref{alg:initial-solution}--\ref{alg:local-search}.
    \begin{algorithm}[H]
        \caption{Algorithm for generating initial solutions}\label{alg:initial-solution}
        \begin{algorithmic}[1]
            \Require $G=(V, E, w)$, $s, t \in V, d_{SD} : V \times V \to \xR_{+}$
            \Ensure $p \in \mathcal{P}_{G}$
            \State $k=0, v_0 = s$
            \While {$v_k \neq t$}
            \State Choose a node $v_{k+1}$ uniformly random from $V_k = \{v : (v_k, v)\in E, d_{SD}(v_k, t) > d_{SD}(v, t)\}$
            \State $k \gets k+1$
            \EndWhile
            \Return $p = sv_1v_2 \dots v_{k-1}t$
        \end{algorithmic}
    \end{algorithm}
    \begin{algorithm}[H]
        \caption{Local search-based algorithm for SP}\label{alg:local-search}
        \begin{algorithmic}[1]
            \Require $G^\prime=(V, V\times V, w^\prime)$, $s, t \in V$
            \Ensure $p \in \mathcal{P}_{G^\prime}$
            \State Calculate $p$ by Algorithm \ref{alg:initial-solution}
            \While {$p$ is not a local optimal path on $G^\prime$}
            \State Pick $q \in N_{SP}(p)$ such that $|p| - |q| > 0$ uniformly random
            \If {$q$ is not simple}
            \State Remove a cycle from $q$
            \EndIf
            \State $p \gets q$
            \EndWhile
            \Return $p$
        \end{algorithmic}
    \end{algorithm}
    Algorithm \ref{alg:initial-solution} generates an initial solution $p$ by choosing a neighbor node of the current node randomly, and Algorithm \ref{alg:local-search} is a local search-based algorithm for SP on $G^\prime$.
    The only difference between our algorithm and regular local search is that we modify it to a simple path by removing the cycle when updating the current solution to a trail.
    This difference is necessary to ensure the algorithm outputs a simple path on $G$.

    Finally, we prove a claim about Algorithm \ref{alg:local-search}.
    \begin{cla}
        \label{cla:simple}
        Algorithm \ref{alg:local-search} always outputs an $s-t$ path on $G$.
    \end{cla}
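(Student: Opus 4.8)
The plan is to prove, by induction on the iterations of Algorithm~\ref{alg:local-search}, the loop invariant that the current object $p$ is a simple $s$--$t$ path, to deduce termination from it, and then to check that the path actually returned lies in $G$ and not merely in $G'$. For the base case one first verifies that Algorithm~\ref{alg:initial-solution} is well defined: whenever $v_k\neq t$, the neighbour of $v_k$ on any shortest $v_k$--$t$ path of $G$ is strictly closer to $t$ (positivity of the weights forbids a neighbour at the same distance), so $V_k\neq\emptyset$; and since $d_{SD}(\cdot,t)$ strictly decreases along the generated sequence, no vertex recurs, so the procedure halts and returns a simple $s$--$t$ path whose edges all lie in $E$.

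For the inductive step, let $p=sv_1\cdots v_{k-1}t$ be simple and let $q\in N_{SP}(p)$ be the improving neighbour chosen in the loop. By the definition of $N_{SP}$, $q$ is an $s$--$t$ walk, and the key point is that $q$ has at most one repeated vertex and it occurs exactly twice: a deletion move ($N_{SP}^1$) cannot create a repetition, while a substitution ($N_{SP}^2$) or an insertion ($N_{SP}^3$) puts into the walk only one fresh vertex $u$, so the sole new coincidence that can appear is $u$ equalling some $v_j$ already on $p$ (in the degenerate case where $u$ is a $p$-neighbour of the edited position this is just a self-loop). Hence $q$ contains at most one cycle, namely the sub-walk between the two occurrences of the duplicated vertex, and a single ``remove a cycle'' step turns $q$ into a simple walk; the result is still an $s$--$t$ walk, since even when the duplicated vertex is $s$ or $t$, excising the enclosed cycle leaves one copy at the correct endpoint. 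Thus the invariant is restored. Termination then follows because each iteration strictly decreases $|p|$ (the chosen $q$ satisfies $|q|<|p|$, and deleting a cycle removes only edges of nonnegative $w'$-weight), while by the invariant $p$ ranges over the finite set of simple $s$--$t$ paths.

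It remains to argue that the returned $p$ traverses no ``virtual'' edge $e\in\bar E$ (of weight $\lambda d_{SD}(e)$), so that $p$ is indeed a path on $G$; I expect this to be the main obstacle. The natural attempt is by contradiction: if $p$ used such an edge $(v_i,v_{i+1})$, one seeks an internal vertex $u$ of the precomputed shortest $G$-path $P(v_i,v_{i+1})$ whose insertion between $v_i$ and $v_{i+1}$ strictly decreases $|p|$, exploiting $\lambda>1$ together with the fact that a sub-path of a shortest path is again a shortest path---so at least one of the two newly created edges has $w'$-weight equal to the true distance rather than $\lambda$ times it. The delicate part is \emph{which} interior vertex to insert: a generic one can fail when both endpoints of the virtual edge are joined to the interior of $P(v_i,v_{i+1})$ by ``expensive'' real edges, so one should take a boundary vertex of $P(v_i,v_{i+1})$ (which always realises its distance), handle separately the cases $u\in\{s,t\}$ or $u$ already on $p$ (which create a cycle to be removed), and, if single local moves do not close the argument, fall back on replacing each virtual edge of $p$ by its $P$-expansion and then removing cycles.
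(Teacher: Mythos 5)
Your core argument is exactly the paper's proof: assume the returned path contains a virtual edge $(v_{m-1},v_m)\in\bar E$, insert the neighbour $v'_m$ of $v_{m-1}$ on a shortest $G$-path $p_m$ between $v_{m-1}$ and $v_m$, and obtain an improvement of $(\lambda-1)w(v_{m-1},v'_m)>0$, contradicting local optimality on $G'$ --- including your key observation that one must insert a \emph{boundary} vertex of $p_m$ so that the edge $(v_{m-1},v'_m)$ is a real edge realising its shortest distance. The paper proves only this part; your additional verifications (that Algorithm~\ref{alg:initial-solution} is well defined and returns a simple path, that each move of $N_{SP}$ creates at most one duplicated vertex so a single cycle removal restores simplicity, and that the strictly decreasing length over the finite set of simple paths forces termination) are correct and fill in steps the paper leaves implicit. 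One substantive remark: the ``delicate part'' you flag is genuine and is not resolved in the paper either. The paper's chain of inequalities uses $w'(v'_m,v_m)\le\lambda\bigl(|p_m|-w(v_{m-1},v'_m)\bigr)=\lambda d_{SD}(v'_m,v_m)$; this holds with equality when $(v'_m,v_m)\in\bar E$, but when $(v'_m,v_m)\in E$ it asserts $w(v'_m,v_m)\le\lambda d_{SD}(v'_m,v_m)$, which does not follow from the stated hypotheses (a real edge may be far longer than $\lambda$ times the shortest distance between its endpoints, precisely the ``expensive real edge'' you describe). Your proposed fallback of expanding the virtual edge along all of $P(v_{m-1},v_m)$ is not a single move of $N_{SP}$, so it cannot directly contradict local optimality; closing that case would need either a different improving move or an added assumption such as $w(e)\le\lambda\, d_{SD}(e)$ for all $e\in E$. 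Modulo this shared gap, your proposal coincides with, and is somewhat more complete than, the paper's proof.
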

    \begin{proof}
        Let $p^\prime = sv_1v_2 \dots v_{k-1}t \in \mathcal{P}_{G^\prime}$ be an arbitrary output of the algorithm, $v_0=s$, and $v_k=t$. To prove the claim, it suffices to show that $(v_{i-1}, v_i) \in E$ for all $0 \le i \le k$.
        Thus, for the sake of contradiction, we assume that there exists an integer $1 \le m \le k$ such that $(v_{m-1}, v_m) \in \bar{E}$.
        Let $p_m$ be a shortest path on $G$ between $v_{m-1}$ and $v_m$, and we assume $(v_{m-1}, v^\prime_m) \in p_m$.
        From the assumptions, we have $v^\prime_m \neq v_m$.
        Now we consider a neighbor path $\bar{p} = sv_1v_2 \dots v_{m-1}v^\prime_mv_m \dots v_{k-1}t \in N_{SP}(p^\prime)$.
        We have
        \begin{align}
            |p^\prime| - |\bar{p}| &= w(v_{m-1}, v_m) - w(v_{m-1}, v^\prime_m) - w(v^\prime_m, v_m) \nonumber\\
            &\ge \lambda |p_m| - w(v_{m-1}, v^\prime_m) - \lambda (|p_m| - w(v_{m-1}, v^\prime_m))\nonumber\\
            &= (\lambda-1) w(v_{m-1}, v^\prime_m)\nonumber\\
            &> 0.\label{eq:dif-of-insert-path}
        \end{align}
        This contradicts the assumption that $p$ is a local optimal path on $G^\prime$.
    \end{proof}

    Claim \ref{cla:simple} ensures that Algorithm \ref{alg:local-search} can be used for SP on $G$.

    \subsection{Traveling salesman problem}
    Let $G=(V, E, w)$ be a complete weighted undirected graph.
    Each edge $e \in E = V \times V $ has a non-negative weight $w(e) \ge 0$.
    Traveling Salesman Problem (TSP) aims to find a Hamiltonian cycle with the minimum total weight of edges in the cycle.

    In the experiment, we use random permutations of $V$ as initial solutions and apply the local search algorithm with the 2-OPT neighborhood.

    \subsection{Set packing problem}
    Let $U$ be a finite set, $\mathfrak{S}$ be a family of sets over $U$, and $c : \mathfrak{S}\to \xR_{+}$ be a cost function.
    Define $\mathcal{S} \subseteq \mathfrak{S}$ is a packing of $\mathfrak{S}$ if $s_i \cap s_j = \emptyset$ for all $s_i, s_j \in \mathcal{S}$.
    Set Packing Problem (SPP) aims to find a packing $\mathcal{S} \subseteq \mathfrak{S}$ that maximizes $\sum_{s_i \in \mathcal{S}} c(s_i)$.

    This paper considers rectangle SPP in the experiment. In this problem, vertices on an $n_1 \times n_2$ grid graph mean $U$, and $\mathfrak{S}$ is a set of vertices on an axis-aligned rectangle of the grid.
    We formally define the rectangle set as follows:
    \begin{dfn}
        \label{dfn:rect}
        Let $P = \{1, 2, \dots, n_1\}\times \{1, 2, \dots, n_2\}$ be the positions of the vertices on $n_1 \times n_2$ grid graph, $U = \{1, 2, \dots, n_1\times n_2\}$ be the index set of the vertices, and $X: U \to P$ be the bijection that expresses the position of each vertex.
        Then we define a set $S\subseteq U$ is a rectangle set of $P$ if there exist $1 \le a_1 \le b_1 \le n_1$ and $1 \le a_2 \le b_2 \le n_2$ such that $S = \{i \in U : X(i) \in [a_1, b_1] \times [a_2, b_2]\}$.
    \end{dfn}
    Definition \ref{dfn:rect} defines a rectangle set as a subset whose elements, when mapped to their positions, form vertices of an axis-aligned rectangle.
    Using this definition, we define rectangle SPP as below:
    \begin{dfn}
        \label{dfn:rectSPP}
        Rectangle set packing problem is a set packing problem where all given sets are rectangle sets on an $n_1 \times n_2$ grid graph.
    \end{dfn}
    Since the set takes the form of a rectangle, solutions of rectangle SPP can be visualized as a packing of rectangles.

    Next, we define a neighborhood $N_{SPP}(\mathcal{S})$ which uses a local search algorithm in the experiment:
    \begin{dfn}
        Let $\mathcal{S} \subseteq \mathfrak{S}$ be an arbitrary packing.
        Then we define $N_{SPP}(\mathcal{S}) = \{\mathcal{S}^\prime \in N_{SPP}^1(\mathcal{S}) \cup N_{SPP}^2(\mathcal{S}) : \mathcal{S}^\prime \text{ is a set packing}\}$, where

        $N_{SPP}^1(\mathcal{S}) = \left\{\mathcal{S}^\prime = (\mathcal{S} \setminus \{s_i\}) \cup \{s_j\} : s_i, s_j \in \mathcal{S} \right\}$

        and

        $N_{SPP}^2(\mathcal{S}) = \left\{\mathcal{S}^\prime = \mathcal{S} \cup \{s_i\} : s_i \in \mathcal{S} \right\}$.
    \end{dfn}
    The neighborhood $N_{SPP}(\mathcal{S})$ consists of two types of packings: $N_{SPP}^1(\mathcal{S})$ is a neighbor obtained by removing a set and adding another set from $\mathcal{S}$, and $N_{SPP}^2(\mathcal{S})$ is a neighbor obtained by only adding a set into $\mathcal{S}$.

    \section{Experiments}
    \label{sec:exp}
    This section describes the experimental results comparing the proposed algorithm with others.
    The experiments aim to investigate the performance of SCS in comparison with other algorithms.
    For the dissimilarity function, we employed different measures depending on the problem: the Jaccard distance between edges in the paths or cycles for SP and TSP, and the Jaccard distance between solutions for SPP.
    For any two sets, say $A$ and $B$, the Jaccard distance is defined as $1 - \frac{|A \cap B|}{|A \cup B|}$.

    This paper evaluates the performance of the proposed algorithm by comparing several diversity measures and computation time with other algorithms.
    We employ three diversity measures to evaluate the solution sets:
    \begin{enumerate}
        \item Minimum distance between solutions ($D_{min}$)
        \item Average distance between solutions ($D_{avg}$)
        \item Solow-Polasky diversity ($D_{SP}$)
    \end{enumerate}

    In the experiments, we used Microsoft Surface Laptop Studio 2 Z2F-00018(Corei7-13800H, 2.90GHz, Windows 11 Pro) and ran the experiments on a single node.
    The software used for the computation was VS Community 2022 64bit 17.11.5, Microsoft Visual C++ 2022, and Eigen 3.4.0 \cite{eigenweb} library.
    We used Python 3.12.0 with Matplotlib 3.8.2 \cite{Hunter2007} and NetworkX 3.2.1 \cite{SciPyProceedings_11} libraries to draw the obtained solutions.

    \subsection{Benchmark problems}
    The instances used in the experiments were positive-weighted SP with a 2-dimensional grid graph, SP with Euclidean unit disk graph, Euclidean TSP, and rectangle SPP.
    For each problem, we dealt with two problem sizes, and 20 instances were randomly generated for each.
    Table \ref{tab:problem-parameters} denotes the name and property of the instances.

    \begin{table}[H]
        \centering
        \caption{The parameters of instances used in this paper. ``Instance'' denotes the instance name, ``Problem'' indicates the underlying problem of the instance, ``Type'' refers to the variant of the problem, ``Size'' represents the problem size, and ``$\#$Ins.'' shows the number of instances.}
        \begin{tabular}{|c|c|c|c|c|}
            \hline
            Instance & Problem& Type & Size & $\#$Ins. \\
            \hline
            SP-Grid-100 & SP & Grid & $10\times10$ & 20 \\
            SP-Grid-900 & SP & Grid & $30\times30$ & 20 \\
            SP-EUD-100 & SP & Euclidean Unit Disk & 100 & 20 \\
            SP-EUD-900 & SP & Euclidean Unit Disk & 900 & 20 \\
            TSP-100 & TSP & Euclidean & 100 & 20 \\
            TSP-900 & TSP & Euclidean & 900 & 20 \\
            SPP-100 & SPP & Rectangle & $((10\times10), 100)$ & 20 \\
            SPP-900 & SPP & Rectangle & $((30\times30), 900)$ & 20 \\
            \hline
        \end{tabular}
        \label{tab:problem-parameters}
    \end{table}

    \busub{Shortest Path problem}
    We utilized grid graphs and Euclidean unit disk graphs.
    For the grid graph, we considered two problem sizes: 10$\times$10 and 30$\times$30. The edge weights were uniformly and independently generated in the $[100,000, 1,000,000]$ range.
    For the unit disk graph, we considered two problem sizes: 100 and 900 vertices, and the positions of the vertices were uniformly and independently generated in the range of $[0, 1]^2$. Weights were the Euclidean distance of the vertices, and the disk width was set to $\frac{2}{\sqrt{|V|}}$, which is approximately twice the edge length in a grid graph with the same node number. Distances were rounded to the sixth decimal place.
    We generated the graphs using this method and only employed the connected graphs as benchmark problems.

    \busub{Traveling salesman problem}
    This paper experimented Euclidean traveling salesman problems.
    We considered two problem sizes: 100 and 900 vertices. The positions of the vertices were uniformly and independently generated in the range of $[0.0, 1.0]^2$. We did not allow duplicate vertex positions. The positions and distances between the vertices were rounded to the sixth decimal place.

    \busub{Set packing problem}
    This paper employed rectangle set packing problems.
    We considered two problem sizes: 10$\times$10 and 30$\times$30, and the numbers of sets were 100 and 900, respectively. The rectangles of the sets were uniformly and independently selected in the range of $[1.0, 5.0]$ for both the width and height, and then, a feasible one of the four rotations was randomly chosen. The weight of the set $w(S)$ was determined by rounding $r|S|$ to the nearest integer, where $r$ is a random number uniformly selected from $[1.0, 5.0]$. We ensured that the set with the same elements was not generated in the same instance.

    \subsection{Experimental settings}
    This paper describes the parameter settings and the experimental methods for SCS.
    We set the parameters used in SCS: the number of generated solutions $n$ was 1,000, and the proximity threshold of solutions $\delta$ was 0.1. In micro-clustering, we set the minimum clique size to 5, the similarity threshold to 0.15, the maximum repetition number to 99, and the similarity measure to the resemblance function. In the local search algorithm for SP, we set $\lambda$ to 1.1. The parameter $\beta$ in the Solow-Polasky diversity was $2.0 \times 10^{-5}$.

    Using these parameters, we tested SCS with three different selection criteria: \ref{sel:rem-min}, \ref{sel:rem-min-avg}, and \ref{sel:rem-sp}.
    For each experiment, the results up to the setup of the selection step were shared among these three variants, while only the final selection step differed.

    \subsection{Compared algorithms}
    The compared algorithms were simple multi-start local search, Yen's algorithm \cite{Yen1970}, NOAH \cite{Ulrich2011}, and SCS. Yen's algorithm was applied only to SP, and NOAH was tested only on TSP and SPP.
    The multi-start local search algorithm outputs the first $n_s$ distinct solutions in the sampling step of SCS, where $n_s$ represents the number of the solutions output by SCS. These solutions were also utilized as the initial solution of NOAH.
    In Yen's algorithm, we applied Local search to enumerate local optimal solutions.
    The algorithm generated $n_s$ different solutions or terminated after 100,000 iterations.
    The original Yen's algorithm was also used as a reference without considering the local optimal condition.
    In the NOAH experiments, we considered three diversity functions and two termination conditions. Therefore, we experimented with six NOAH variants.
    The diversity functions were the experimented diversity measures ($D_{min}$, $D_{avg}$, and $D_{SP}$), and the termination conditions were the execution time limit and the number of consecutive generations with no improvements.
    We set the time limit as the maximum execution time of three SCS results and the number of generations to 20.
    Since our problem does not consider the objective values of each solution, we applied the same local search algorithm as used by SCS instead of another EA in its solution improvement phase of NOAH.
    Thus, it became a Memetic algorithm rather than a two-phase EA.
    In the TSP experiments of NOAH, we used PMX crossover and swap mutation.
    For SPP, NOAH employed a crossover used in Delorme et al. \cite{Delorme2010} and a mutation operator that randomly removes a single set packed by the solution.
    Unless otherwise stated, the parameters of NOAH followed Ulrich and Thiele \cite{Ulrich2011}.
    Table \ref{tab:comparing-algorithm} summarizes and names the comparing algorithms.
    \begin{table}[H]
        \centering
        \caption{Comparison of algorithm properties. ``Type'' denotes the type of the algorithm, ``Diversity'' indicates the diversity measure used in the algorithm, and ``Termination'' describes the termination condition of the algorithm. }
        {
            \begin{tabular}{|c|c|c|c|c|}
                \hline
                Algorithm & Type& Diversity & Termination \\
                \hline
                MSLS & LS & - & Find $n_s$ distinct solutions or 100,000 iterations\\
                Yen & $k$-best & - & Find $n_s$ distinct solutions or 100,000 iterations\\
                Yen-LS & $k$-best & - & Find $n_s$ distinct solutions or 100,000 iterations\\
                NOAH-MIN-O & NOAH & $D_{min}$ & No improvement for 20 generations. \\
                NOAH-MIN-T & NOAH & $D_{min}$ & Time limit (Max time among SCS results) \\
                NOAH-AVG-O & NOAH & $D_{avg}$ & No improvement for 20 generations. \\
                NOAH-AVG-T & NOAH & $D_{avg}$ & Time limit (Max time among SCS results) \\
                NOAH-SP-O & NOAH & $D_{SP}$ & No improvement for 20 generations. \\
                NOAH-SP-T & NOAH & $D_{SP}$ & Time limit (Max time among SCS results)\\
                SCS-MIN & SCS & $D_{min}$ & -  \\
                SCS-AVG & SCS & $D_{avg}$ & -  \\
                SCS-SP & SCS & $D_{SP}$ & -  \\
                \hline
            \end{tabular}
        }
        \label{tab:comparing-algorithm}
    \end{table}

    \subsection{Results}
    \label{sec:results}
    The experiment results are shown in Table \ref{tab:results}.
\begin{table}[H]
    \centering
    \footnotesize
    \caption{Performance comparison results for all tested instances. Values show average improvement ratios [\%] of diversity measures ($D_{min}$, $D_{avg}$, $D_{SP}$), average number of output solutions ($n_s$), and average computation time compared to multi-start local search (MSLS) across 20 instances per problem type. Parenthetical values in MSLS rows represent absolute average values of each indicator. Best results for each instance are shown in bold.}
    \label{tab:results}
    \scalebox{0.75}
    {
        \begin{tabular}{|l|rrrrr|rrrrr|}
            \hline
            & \multicolumn{5}{c|}{SP-Grid-100} & \multicolumn{5}{c|}{SP-Grid-900} \\
            \hline
            Algorithm & $D_{min}[\%]$ & $D_{avg}[\%]$ & $D_{SP}[\%]$ & $n_s$ & time[s] & $D_{min}[\%]$ & $D_{avg}[\%]$ & $D_{SP}[\%]$ & $n_s$ & time[s] \\\hline
            MSLS & $0\%(0.268)$ & $0\%(0.589)$ & $0\%(6.86)$ & 7.25 & 0.000931 & $0\%(0.205)$ & $0\%(0.635)$ & $0\%(17.0)$ & 18.8 & 0.0585 \\
            \hline
            Yen & $-22.6\%$ & $-29.6\%$ & $-10.7\%$ & 7.25 & 0.0299 & $-55.0\%$ & $-58.9\%$ & $-60.0\%$ & 18.8 & 1.08 \\
            Yen-LS & $-7.43\%$ & $-12.3\%$ & $-2.20\%$ & 7.25 & 0.0150 & $-46.1\%$ & $-42.6\%$ & $-45.7\%$ & 18.8 & 4.56 \\\hline
            SCS-AVG & $60.5\%$ & $20.2\%$ & $3.45\%$ & 7.25 & 2.43 & $204\%$ & $\bm{23.2\%}$ & $9.15\%$ & 18.8 & 6.26 \\
            SCS-MIN & $58.5\%$ & $14.3\%$ & $3.25\%$ & 7.25 & 2.43 & $243\%$ & $18.7\%$ & $9.02\%$ & 18.8 & 6.40 \\
            SCS-SP & $\bm{80.7\%}$ & $\bm{20.5\%}$ & $\bm{3.59\%}$ & 7.25 & 2.43 & $\bm{265\%}$ & $21.9\%$ & $\bm{9.40\%}$ & 18.8 & 7.38 \\
            \hline
            & \multicolumn{5}{c|}{SP-EUD-100} & \multicolumn{5}{c|}{SP-EUD-900} \\ \hline
            MSLS & $0\%(0.438)$ & $0\%(0.788)$ & $0\%(7.35)$ & 7.40 & 0.000885 & $0\%(0.483)$ & $0\%(0.878)$ & $0\%(20.6)$ & 20.7 & 0.115 \\
            \hline
            Yen & $-44.2\%$ & $-46.3\%$ & $-13.0\%$ & 7.40 & 0.0172 & $-78.3\%$ & $-74.0\%$ & $-68.3\%$ & 20.7 & 0.608 \\
            Yen-LS & $-1.54\%$ & $-12.7\%$ & $-5.62\%$ & 7.30 & 67.2 & $-55.4\%$ & $-56.0\%$ & $-55.6\%$ & 13.6 & 472 \\\hline
            SCS-AVG & $51.2\%$ & $\bm{12.6\%}$ & $0.620\%$ & 7.40 & 1.76 & $74.4\%$ & $\bm{6.93\%}$ & $0.406\%$ & 20.7 & 8.61 \\
            SCS-MIN & $47.8\%$ & $10.1\%$ & $0.598\%$ & 7.40 & 1.76 & $79.9\%$ & $5.48\%$ & $0.393\%$ & 20.7 & 8.82 \\
            SCS-SP & $\bm{58.9\%}$ & $12.2\%$ & $\bm{0.627\%}$ & 7.40 & 1.76 & $\bm{82.4\%}$ & $6.73\%$ & $\bm{0.409\%}$ & 20.7 & 10.3 \\
            \hline
             & \multicolumn{5}{c|}{TSP-100} & \multicolumn{5}{c|}{TSP-900} \\ \hline
            MSLS & $0\%(0.359)$ & $0\%(0.536)$ & $0\%(28.2)$ & 31.6 & 0.0451 & $0\%(0.550)$ & $0\%(0.591)$ & $0\%(15.3)$ & 15.7 & 2.53 \\
            \hline
            NOAH-AVG-O & $-100\%$ & $\bm{28.7\%}$ & $-100\%$ & 31.6 & 11.6 & $7.30\%$ & $7.92\%$ & $0.977\%$ & 15.7 & 297 \\
            NOAH-AVG-T & $-33.8\%$ & $27.5\%$ & $-3.18\%$ & 31.6 & 7.12 & $2.57\%$ & $6.88\%$ & $0.852\%$ & 15.7 & 165 \\
            NOAH-MIN-O & $25.1\%$ & $3.81\%$ & $2.52\%$ & 31.6 & 0.454 & $1.08\%$ & $-0.479\%$ & $-0.0735\%$ & 15.7 & 17.7 \\
            NOAH-MIN-T & $29.3\%$ & $4.84\%$ & $3.03\%$ & 31.6 & 7.11 & $1.79\%$ & $-0.585\%$ & $-0.0868\%$ & 15.7 & 165 \\
            NOAH-SP-O & $53.0\%$ & $25.8\%$ & $\bm{9.04\%}$ & 31.6 & 8.76 & $\bm{10.3\%}$ & $\bm{8.33\%}$ & $\bm{1.02\%}$ & 15.7 & 363 \\
            NOAH-SP-T & $\bm{54.7\%}$ & $25.9\%$ & $8.97\%$ & 31.6 & 7.12 & $8.02\%$ & $6.79\%$ & $0.852\%$ & 15.7 & 165 \\\hline
            SCS-AVG & $27.6\%$ & $11.6\%$ & $5.67\%$ & 31.6 & 5.65 & $3.86\%$ & $2.46\%$ & $0.358\%$ & 15.7 & 165 \\
            SCS-MIN & $33.8\%$ & $8.58\%$ & $4.70\%$ & 31.6 & 5.89 & $4.77\%$ & $1.52\%$ & $0.233\%$ & 15.7 & 165 \\
            SCS-SP & $31.4\%$ & $11.5\%$ & $5.70\%$ & 31.6 & 7.10 & $4.04\%$ & $2.40\%$ & $0.351\%$ & 15.7 & 165 \\
            \hline
            & \multicolumn{5}{c|}{SPP-100} & \multicolumn{5}{c|}{SPP-900} \\ \hline
            MSLS & $0\%(0.264)$ & $0\%(0.753)$ & $0\%(20.6)$ & 21.1 & 0.00221 & $0\%(0.638)$ & $0\%(0.780)$ & $0\%(37.9)$ & 38.2 & 0.597 \\
            \hline
            NOAH-AVG-O & $63.6\%$ & $18.3\%$ & $1.89\%$ & 21.1 & 0.234 & $-100\%$ & $2.95\%$ & $-100\%$ & 38.2 & 58.9 \\
            NOAH-AVG-T & $140\%$ & $\bm{21.2\%}$ & $2.37\%$ & 21.1 & 4.81 & $-100\%$ & $2.28\%$ & $-100\%$ & 38.2 & 19.2 \\
            NOAH-MIN-O & $108\%$ & $1.61\%$ & $1.40\%$ & 21.1 & 0.0555 & $0.933\%$ & $-0.337\%$ & $-0.0349\%$ & 38.2 & 2.73 \\
            NOAH-MIN-T & $141\%$ & $5.81\%$ & $1.88\%$ & 21.1 & 4.81 & $1.31\%$ & $-0.824\%$ & $-0.0744\%$ & 38.2 & 19.2 \\
            NOAH-SP-O & $158\%$ & $13.4\%$ & $2.25\%$ & 21.1 & 0.168 & $-3.42\%$ & $1.84\%$ & $0.129\%$ & 38.2 & 40.3 \\
            NOAH-SP-T & $\bm{194\%}$ & $20.0\%$ & $\bm{2.41\%}$ & 21.1 & 4.81 & $-4.49\%$ & $1.39\%$ & $0.0995\%$ & 38.2 & 19.2 \\\hline
            SCS-AVG & $103\%$ & $16.6\%$ & $2.19\%$ & 21.1 & 3.90 & $8.01\%$ & $\bm{3.82\%}$ & $0.265\%$ & 38.2 & 18.4 \\
            SCS-MIN & $139\%$ & $12.4\%$ & $2.19\%$ & 21.1 & 4.03 & $\bm{12.5\%}$ & $2.47\%$ & $0.196\%$ & 38.2 & 18.5 \\
            SCS-SP & $146\%$ & $15.5\%$ & $2.29\%$ & 21.1 & 4.81 & $10.2\%$ & $3.77\%$ & $\bm{0.266\%}$ & 38.2 & 19.0 \\
            \hline
        \end{tabular}
    }
\end{table}

First, we analyze the experimental results for SP.
These results show that the proposed SCS consistently outperformed MSLS and $k$-best algorithms across all problems and evaluation measures.
For EUD instances, Yen-LS occasionally failed to generate $n_s$ solutions.
This fact indicates that even among the top 100,000 solutions ranked by objective value, there were insufficient local optimal solutions.
This finding reveals that sub-optimal solutions lack sufficient diversity, demonstrating the fundamental limitation of the $k$-best strategy in generating diverse solutions for Euclidean SP problems.
Based on these results, SCS is a practical algorithm for generating diverse solution sets in SP problems.

Next, we examine the results for TSP.
Comparing SCS with MSLS, the experimental results show that SCS outperformed MSLS across all diversity measures although the performance gap between SCS and MSLS are small in the bigger problem.
In the comparison between SCS and NOAH, NOAH-SP-O and NOAH-SP-T exhibited better performance than SCS.
NOAH-AVG-O is also superior to SCS without consideration of $D_{min}$.
These findings indicate that while SCS is more effective than simple sampling for TSP instances, it does not achieve the same level of performance as NOAH.
To analyze these results in more detail, we show the histograms of the distributions of the dissimilarities between sampled 1,000 solutions by SCS of the first instances for each experimented problem in Figures \ref{fig:dissim-SP}--\ref{fig:dissim-others}.
\begin{figure}
    \centering
    \subfigure[SP-Grid-100]{\includegraphics[width=0.4\textwidth]{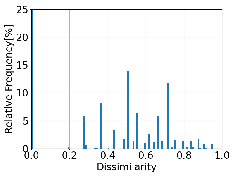}}
    \subfigure[SP-Grid-900]{\includegraphics[width=0.4\textwidth]{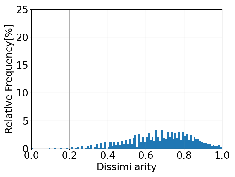}}\\
    \centering
    \subfigure[SP-EUD-100]{\includegraphics[width=0.4\textwidth]{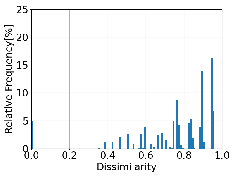}}
    \subfigure[SP-EUD-900]{\includegraphics[width=0.4\textwidth]{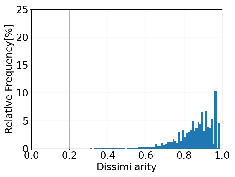}}\\
    \caption{Distributions of pairwise dissimilarities between 1,000 sampled solutions generated by SCS for the first instance of Shortest path problems. Identical solutions are counted if sampled independently (485,000 pairs are counted). Problem names are shown in individual captions. The vertical axis represents relative frequency [\%], while the horizontal axis shows dissimilarity values, with data aggregated in bins of 0.01 width.}
    \label{fig:dissim-SP}
\end{figure}
\begin{figure}
    \centering
    \subfigure[TSP-100]{\includegraphics[width=0.4\textwidth]{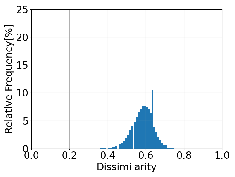}}
    \subfigure[TSP-900]{\includegraphics[width=0.4\textwidth]{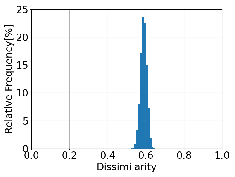}}\\
    \centering
    \subfigure[SPP-100]{\includegraphics[width=0.4\textwidth]{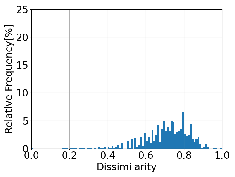}}
    \subfigure[SPP-900]{\includegraphics[width=0.4\textwidth]{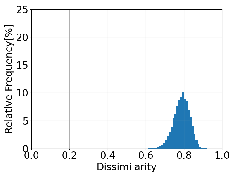}}
    \caption{Distributions of pairwise dissimilarities between 1,000 sampled solutions generated by SCS for the first instance of TSP problems and SPP problems. Identical solutions are counted if sampled independently (485,000 pairs are counted). Problem names are shown in individual captions. The vertical axis represents relative frequency [\%], while the horizontal axis shows dissimilarity values, with data aggregated in bins of 0.01 width.}
    \label{fig:dissim-others}
\end{figure}
Figure \ref{fig:dissim-others} shows that the histograms of TSP have relatively small peak values, approximately equal to 0.6, corresponding to sharing 57\% of edges between distinct solutions on average.
This finding indicates that many edges in the solutions have high probabilities of occurrences and that, on average, there were less than two options for each edge.
Therefore, even though SCS's results were superior to MSLS, NOAH may have more advantages in the TSP instances because it can use its EA operator to search for solutions with low occurrence probabilities in the sampling phase.

Finally, we analyze the results for SPP.
In SPP-100, we can observe trends similar to those in TSP problems.
While SCS performed better than MSLS, it fell short of the improvements relative to NOAH-SP-O and NOAH-SP-T.
However, SCS outperformed both MSLS and NOAH across all diversity measures in SPP-900, although SCS's performance for TSP-900 is worse than TSP-100.
Two factors may explain these differences.
First, the solution space of SPP may be more challenging for NOAH's EA operators to explore.
Second is the difference in local optimal solution distribution between TSP and SPP.
Since the 2-OPT local optimality criterion only evaluates the improvement of edge exchanges in local regions and has little effect on the optimality of structures in distant parts of the tour, TSP's local optimal solutions consist of partial structures.
This characteristic results in relatively small and concentrated dissimilarities between solutions, making it difficult to discover distinct solution patterns through clustering.
In contrast, the local optimal solutions in SPP likely have more distorted distributions, which may be more suitable for clustering-based approaches.
These findings suggest that SCS could be relatively effective in solving computationally hard problems.

Based on these results, we discuss the capabilities of the proposed SCS algorithm.
The experimental findings show that SCS outperformed MSLS across the all evaluation measures, validating the fundamental design of our approach.
In SP problems, SCS demonstrated better performances for all measures over $k$-best algorithms, highlighting its strength in generating diverse solutions for SP.
Although SCS did not match the performance of some NOAH variants in TSP and SPP except for SPP-900, it consistently outperformed MSLS or straightforward enumeration methods.
This observation implies that, while SCS may not compete with advanced heuristics like NOAH, it offers a viable solution for problems where such sophisticated heuristics may underperform and for newly emerging problems that lack specialized algorithms.
This applicability to new problems is particularly valuable as SCS requires only a sampling method or solution neighborhood structure for implementation, making it readily adaptable to various optimization problems where sophisticated heuristics have yet to be developed.

\subsection{Visualization of clustered solutions}
Clustering potential solutions is a critical operation in SCS.
Since similar solutions are grouped within each cluster, aggregating the solutions in each cluster may reveal diverse characteristics of potential solutions, thereby addressing Issues \ref{iss:blackbox}--\ref{iss:suspect}.
Here, we present visualizations of aggregated solutions from the four biggest clusters obtained in our numerical experiments.
Results are shown for the first of 20 instances of each problem.
Figures \ref{fig:SPg-clusters}--\ref{fig:SPP-clusters} show these cluster visualizations.
Elements (edges or sets) are colored based on their sharing ratio within each cluster, with the ratio-to-color correspondence shown in the bottom color bar of each figure.
For visibility, elements are not colored if their sharing ratio is 0 in SP or 0.5 or less in TSP and SPP.
The caption of each superimposition figure indicates the size of the corresponding cluster.

The SP results are shown in Figures \ref{fig:SPg-clusters}--\ref{fig:SPu-clusters}.
\begin{figure}[H]
    \centering
    \footnotesize
    \subfigure[size--617]{\includegraphics[width=0.27\textwidth]{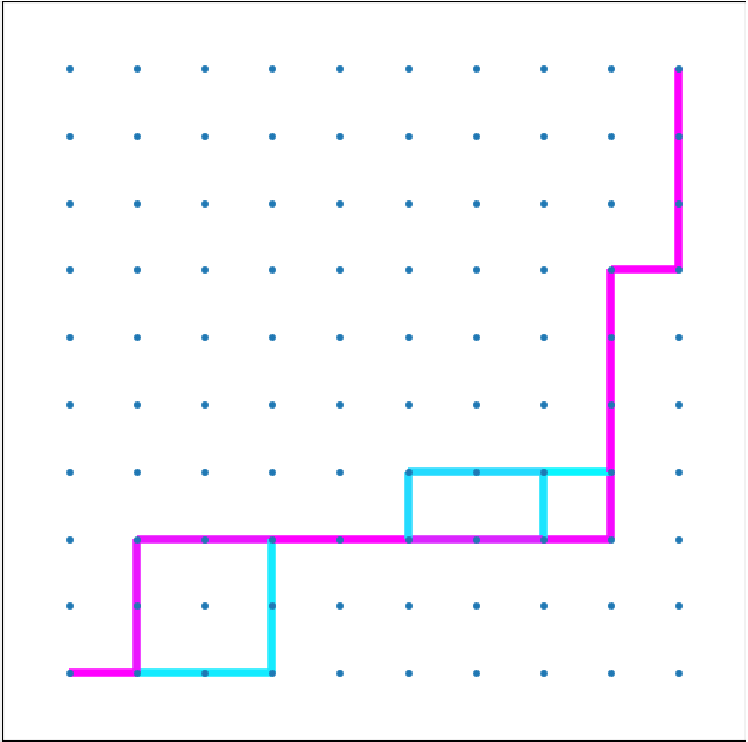}}
    \subfigure[size--157]{\includegraphics[width=0.27\textwidth]{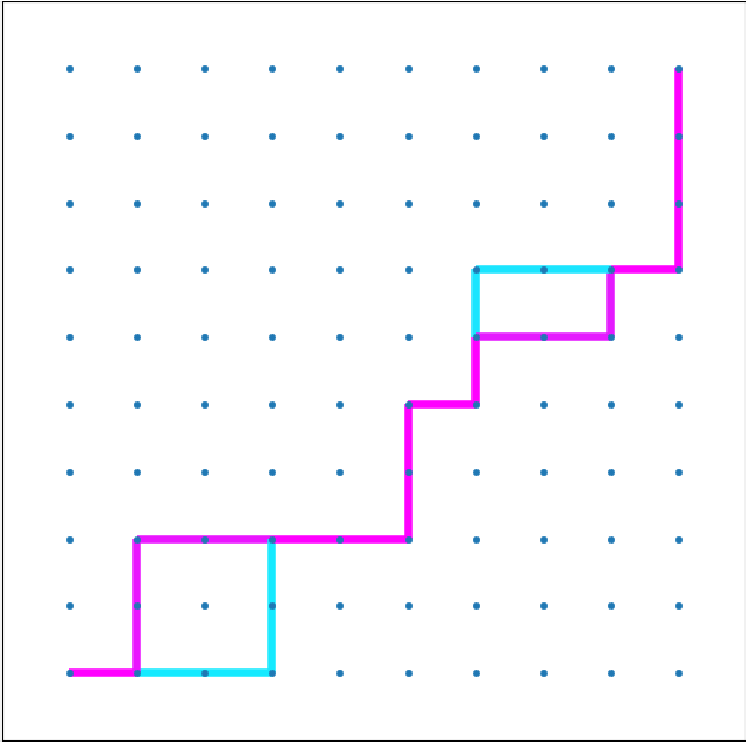}}
    \subfigure[size--152]{\includegraphics[width=0.27\textwidth]{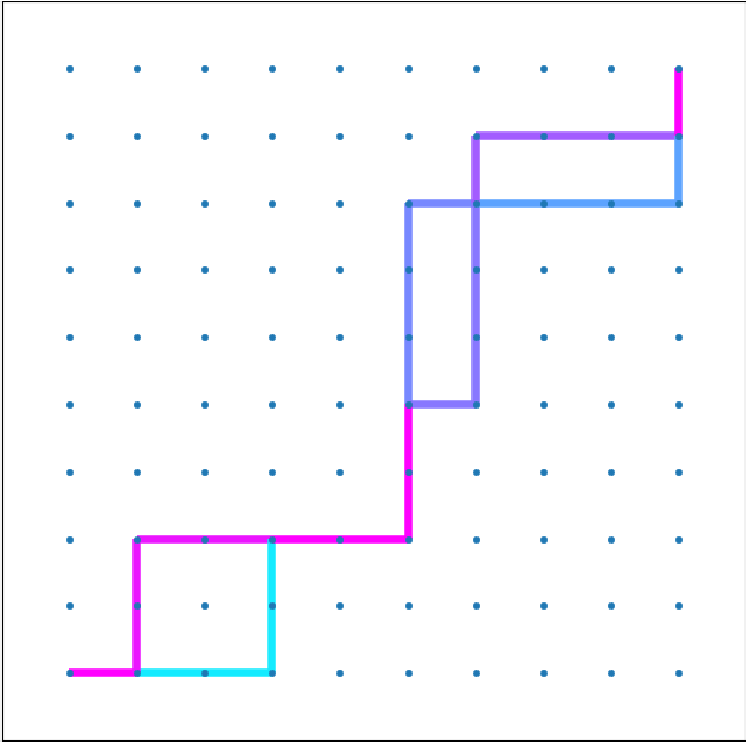}}\\
    \centering
    \subfigure[size--471]{\includegraphics[width=0.27\textwidth]{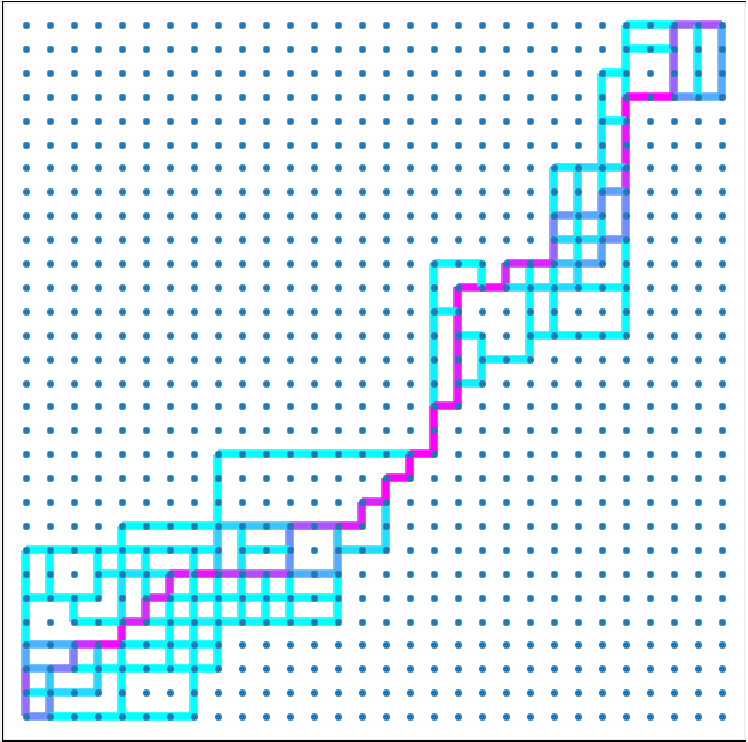}}
    \subfigure[size--101]{\includegraphics[width=0.27\textwidth]{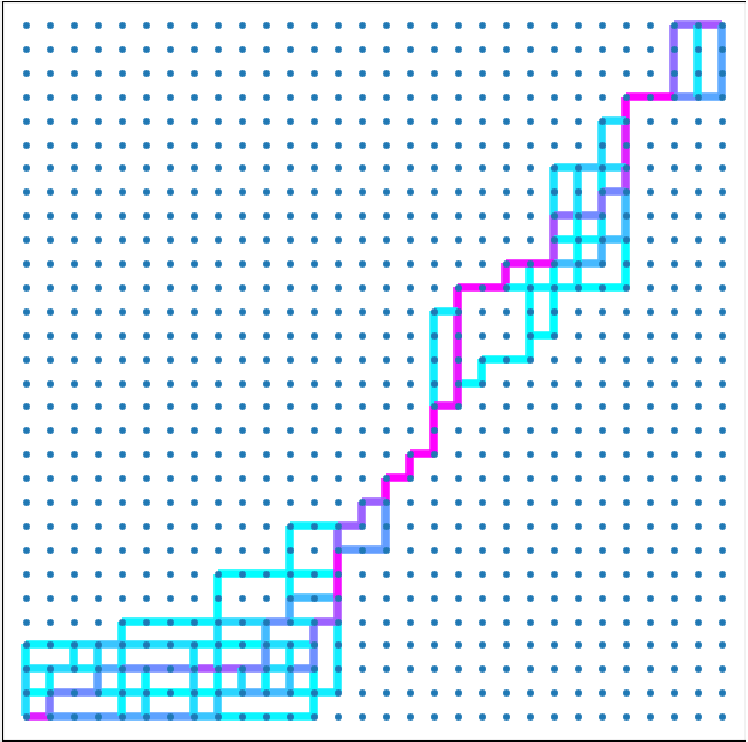}}
    \subfigure[size--63]{\includegraphics[width=0.27\textwidth]{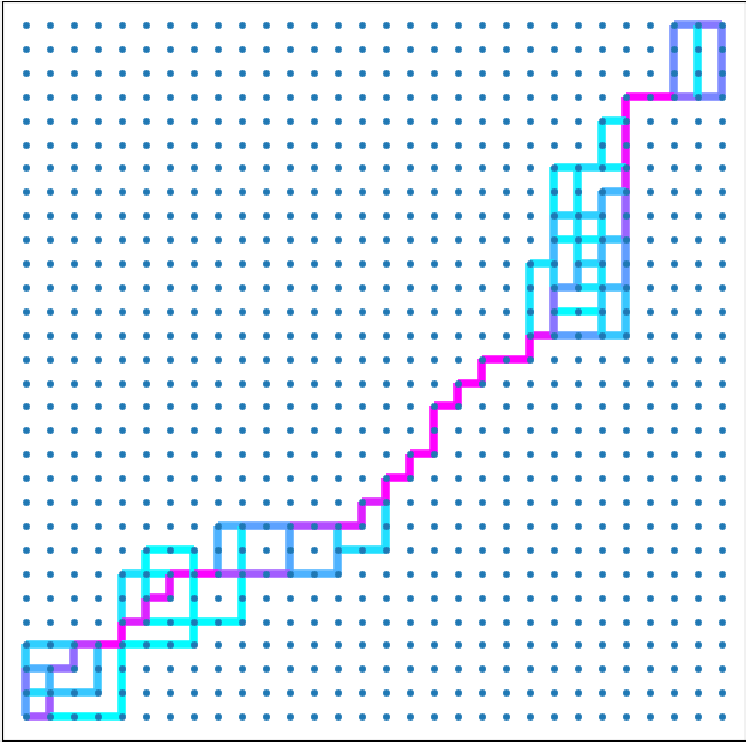}}\\
    \centering
    \subfigure{\includegraphics[width=0.7\textwidth]{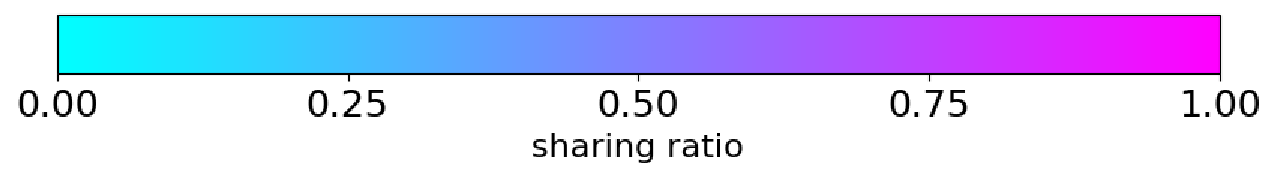}}
    \caption{Superimpositions of three biggest clusters obtained by SCS in the first experimental instances. (a)--(c) show the results of SP-Grid-100 and (d)--(f) depict the results of SP-Grid-900. Blue points represent nodes, with the source node at the bottom left and the sink node at the top right. Numbers in subcaptions indicate cluster sizes. The color bar at the bottom shows the sharing ratio of edges in each cluster.}
    \label{fig:SPg-clusters}
\end{figure}
\begin{figure}[H]
    \centering
    \subfigure[size--462]{\includegraphics[width=0.27\textwidth]{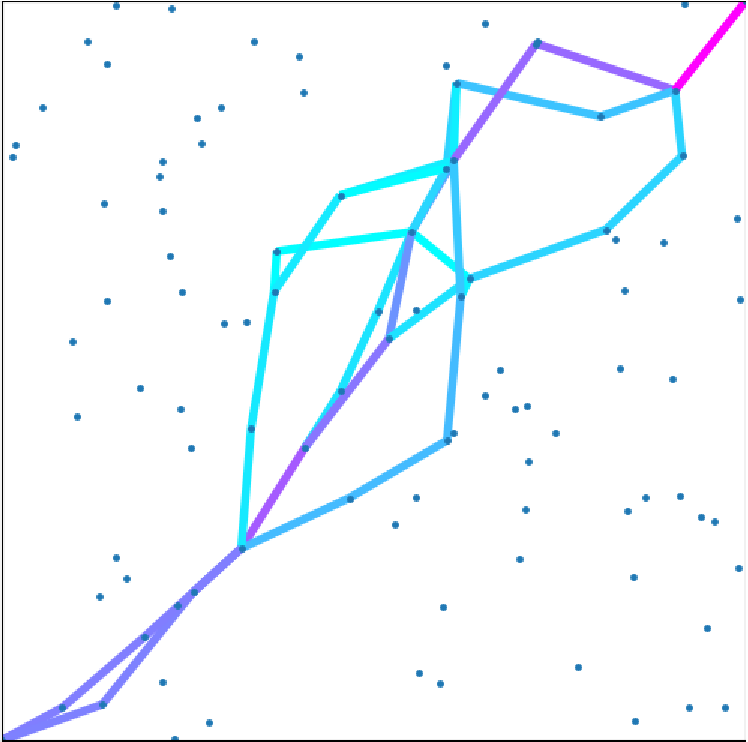}}
    \subfigure[size--228]{\includegraphics[width=0.27\textwidth]{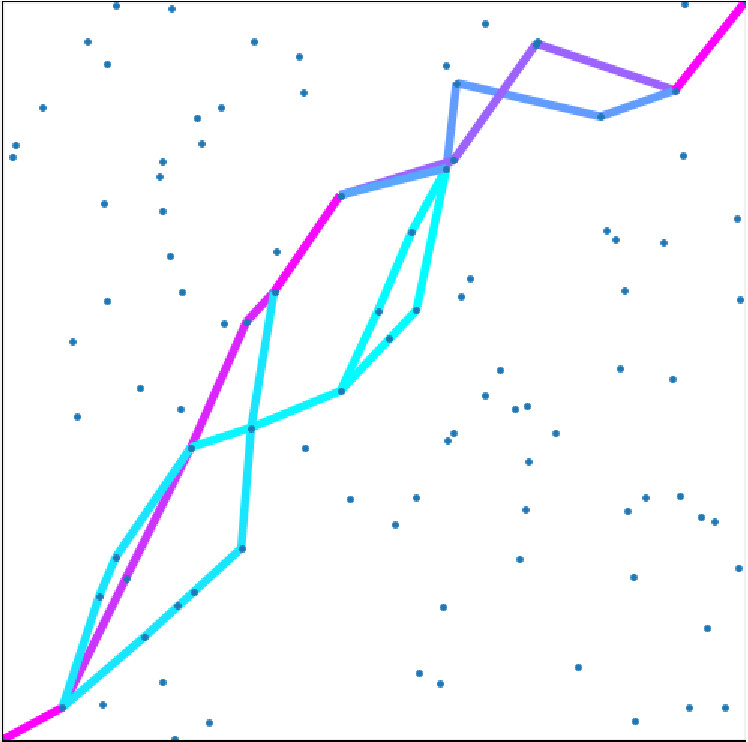}}
    \subfigure[size--158]{\includegraphics[width=0.27\textwidth]{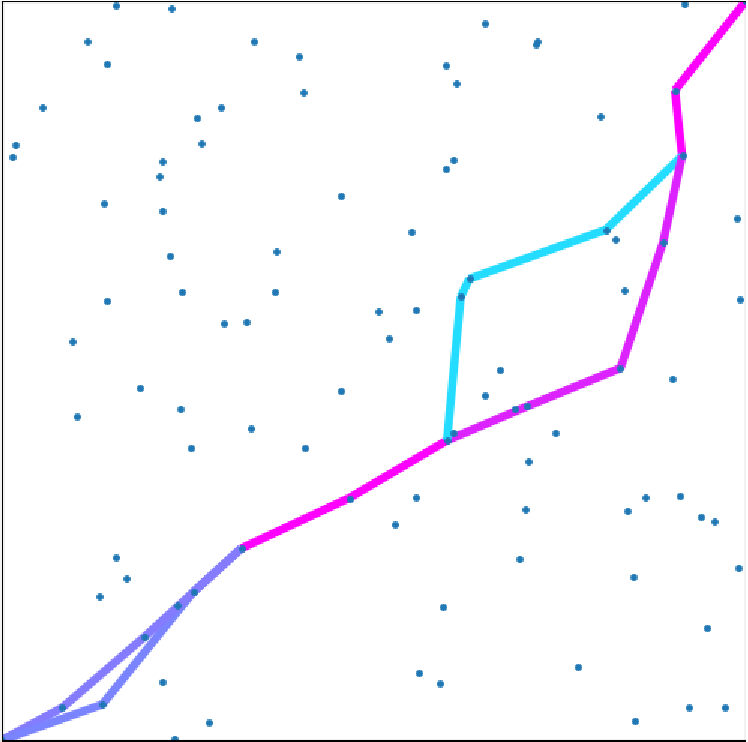}}\\

    \subfigure[size--735]{\includegraphics[width=0.27\textwidth]{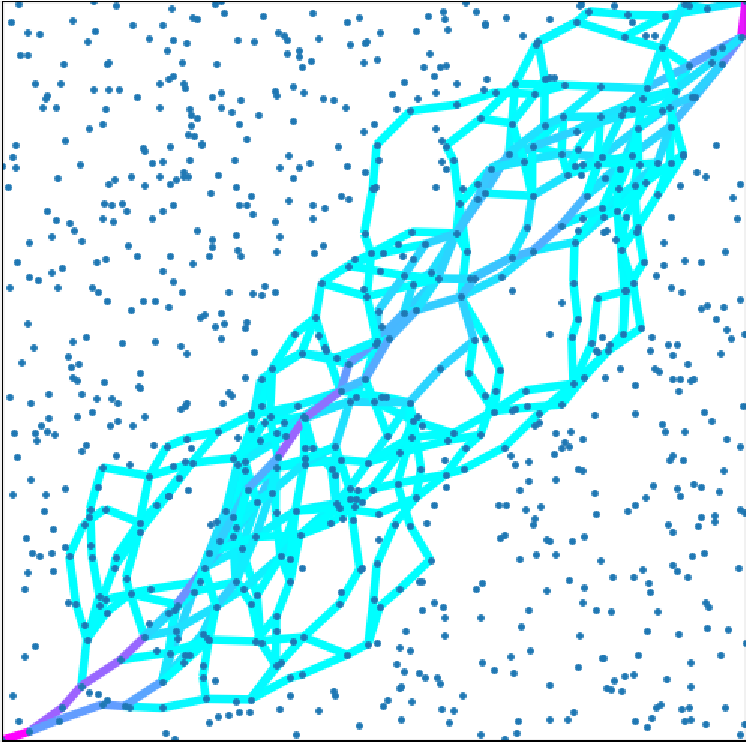}}
    \subfigure[size--74]{\includegraphics[width=0.27\textwidth]{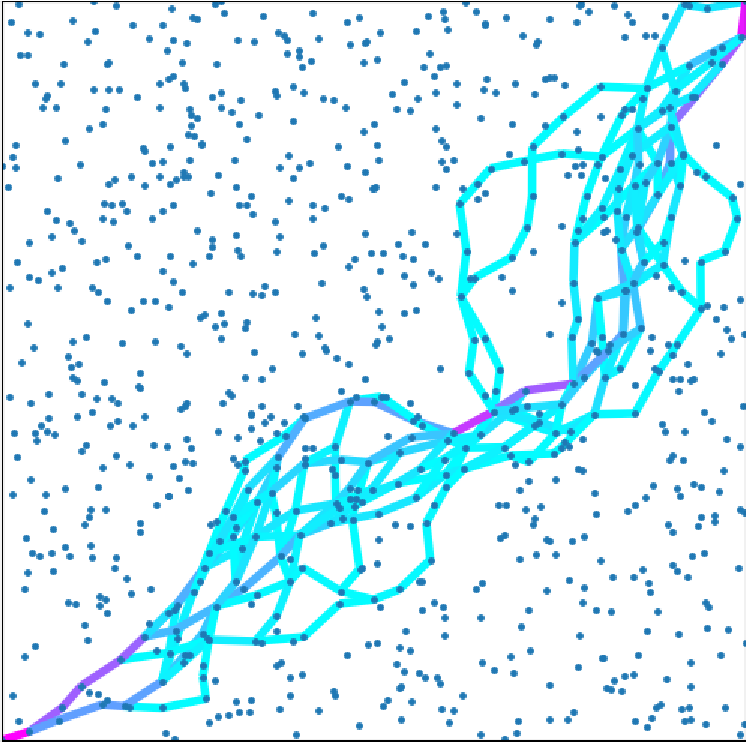}}
    \subfigure[size--33]{\includegraphics[width=0.27\textwidth]{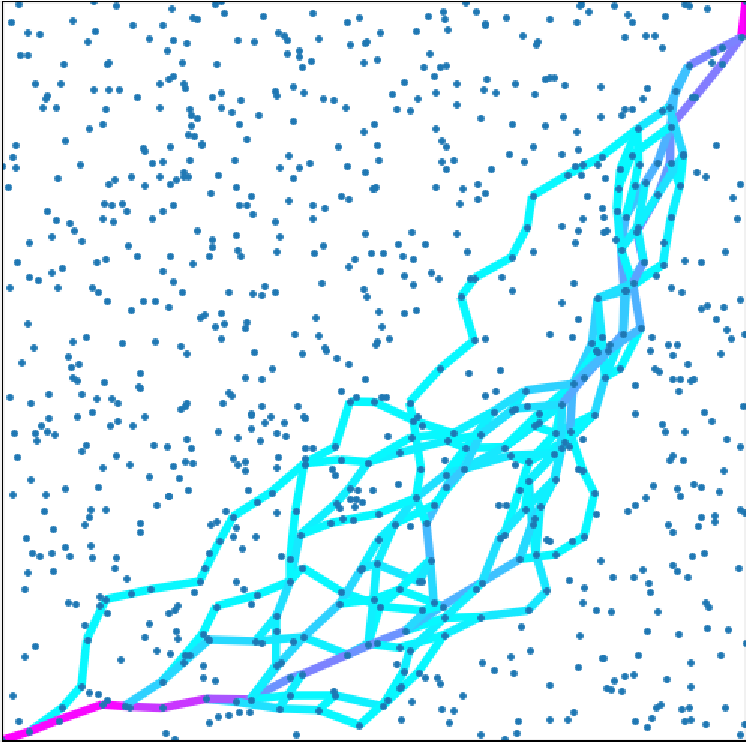}}\\
    \centering
    \subfigure{\includegraphics[width=0.7\textwidth]{cool_colormap_SP.eps}}
    \caption{Superimpositions of three biggest clusters obtained by SCS in the first experimental instances. (a)--(c) show the results of SP-EUD-100 and (d)--(f) depict the results of SP-EUD-900. Blue points represent nodes, with the source node at the bottom left and the sink node at the top right. Numbers in subcaptions indicate cluster sizes. The color bar at the bottom shows the sharing ratio of edges in each cluster.}
    \label{fig:SPu-clusters}
\end{figure}
The SP results are shown in Figures \ref{fig:SPg-clusters}--\ref{fig:SPu-clusters}.
Figure \ref{fig:SPg-clusters} shows distinct central paths, colored red, surrounded by several edges with lower sharing ratios.
Besides, we can distinguish between well-shared and less-shared parts within clusters. For example, in (c), while a single path dominates on the left side, the right regions contain several choices.
In Figure \ref{fig:SPu-clusters}, although the central path in SP-EUD-900 is unclear, similar trends to the grid graph results can be observed.
These visualizations, showing central solutions and their surrounding alternatives, provide more information than presenting individual solutions and can serve as a solution to Issue \ref{iss:not_opt}, especially when there are unmodeled preferences.
Moreover, highly shared parts can be interpreted as crucial components of potential solutions, while less shared parts can be considered less important.
These findings provide important insights for solving optimization problems and address Issue \ref{iss:blackbox}.
Furthermore, visualizing diverse partial structures of possible solutions and their surrounding alternatives provides an overview of potential solutions, contributing to solving Issue \ref{iss:suspect}.
Thus, this clustering approach to SP addresses several issues that existing methods struggle with, as described in Section \ref{sec:intro}.

TSP results are presented in Figure \ref{fig:TSP-clusters}.
\begin{figure}[H]
    \centering
    \footnotesize
    \subfigure[size--551]{\includegraphics[width=0.27\textwidth]{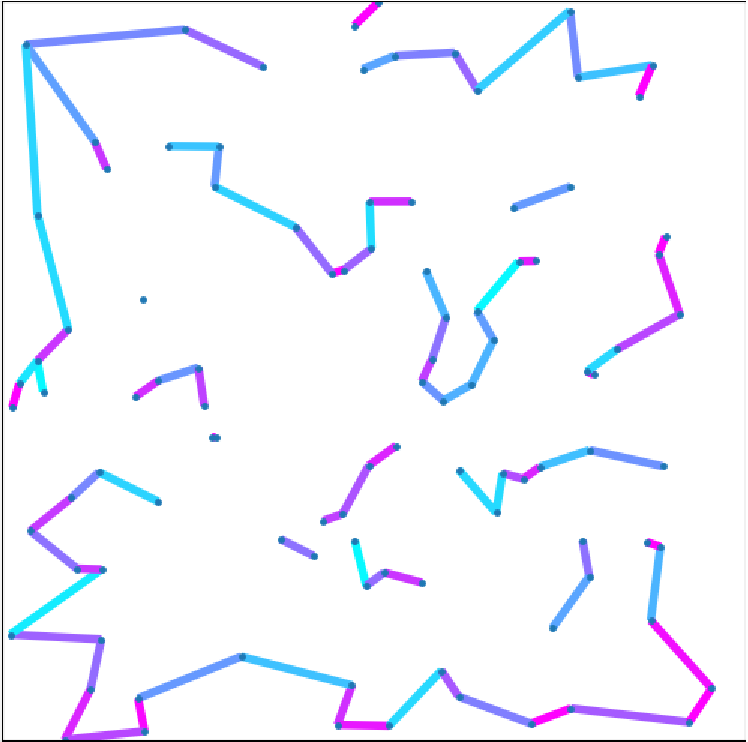}}
    \subfigure[size--44]{\includegraphics[width=0.27\textwidth]{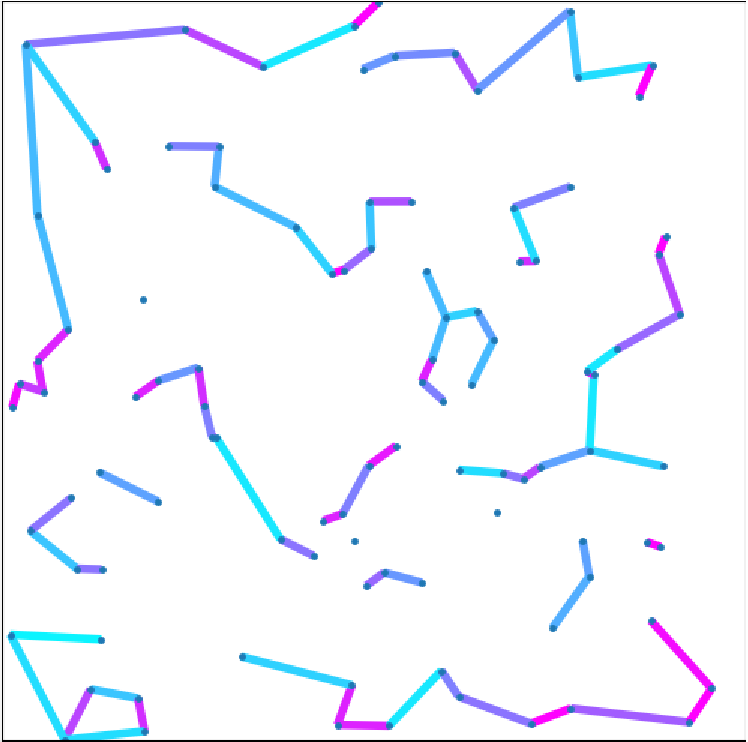}}
    \subfigure[size--25]{\includegraphics[width=0.27\textwidth]{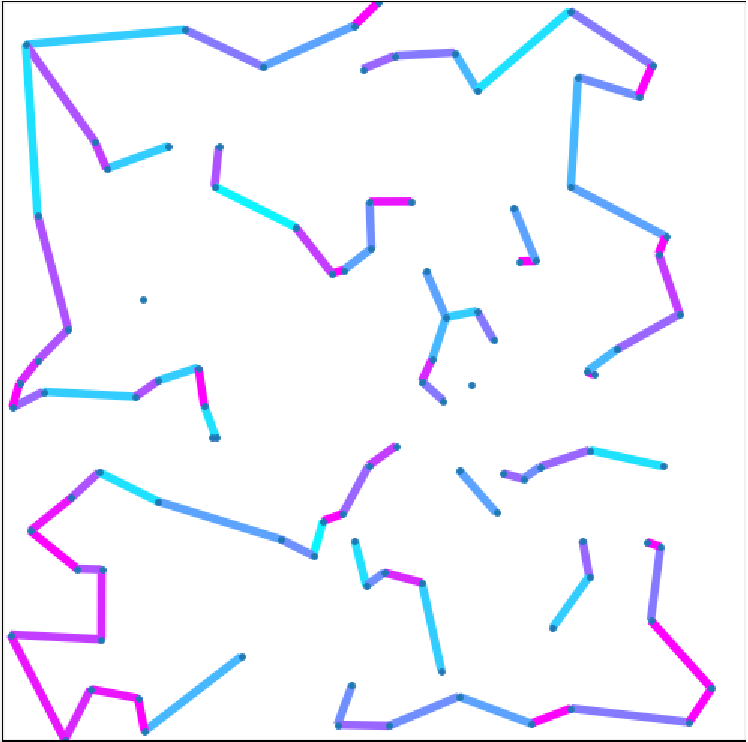}}\\
    \centering
    \subfigure[size--420]{\includegraphics[width=0.27\textwidth]{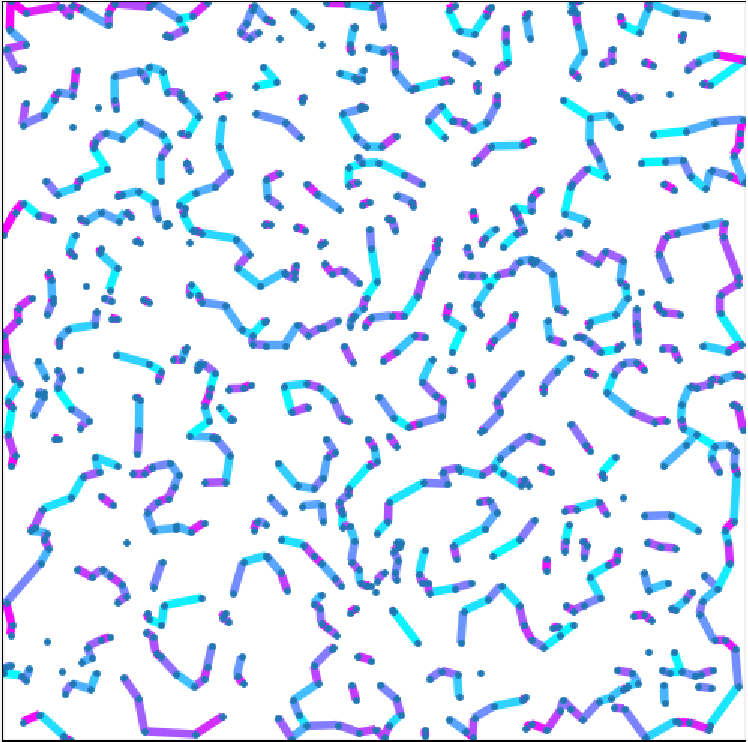}}
    \subfigure[size--13]{\includegraphics[width=0.27\textwidth]{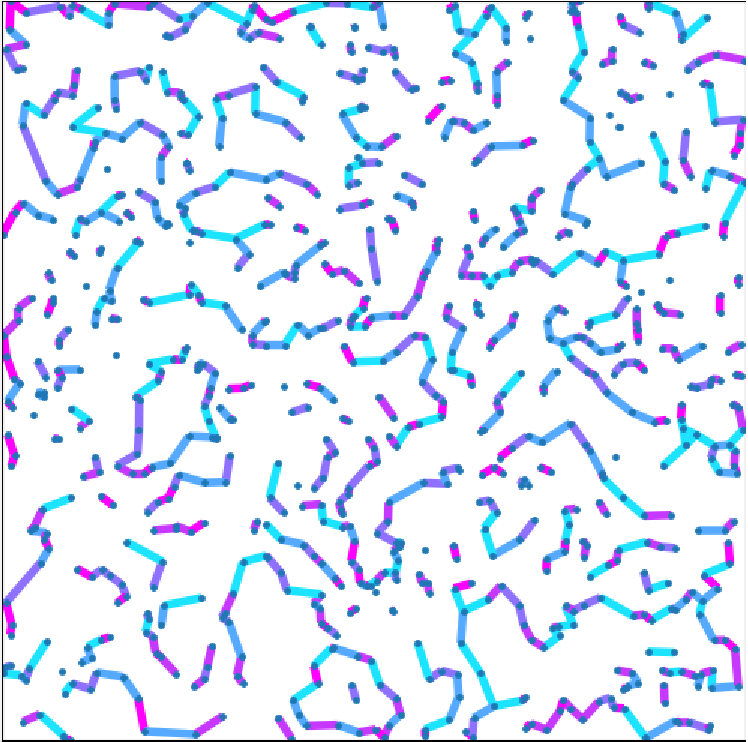}}
    \subfigure[size--7]{\includegraphics[width=0.27\textwidth]{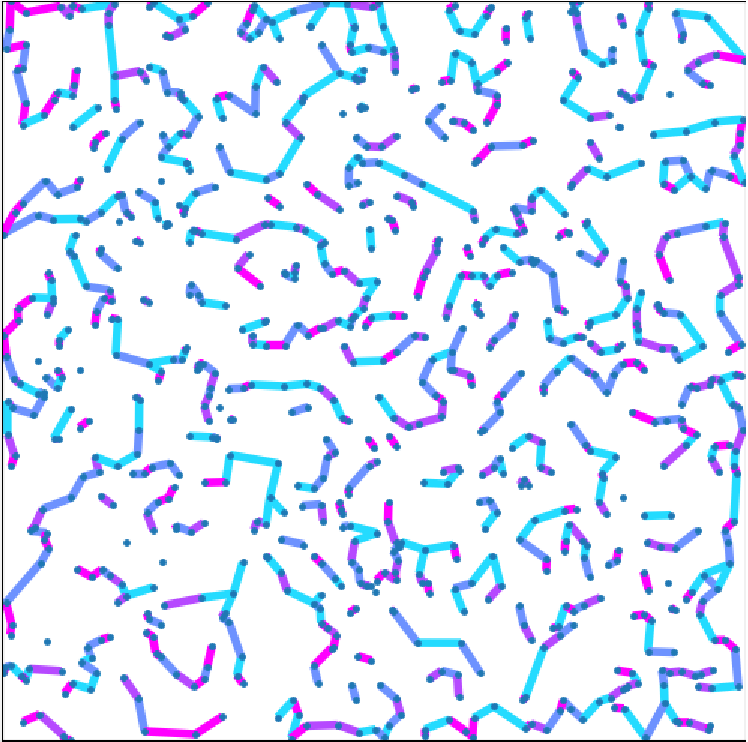}}\\
    \centering
    \subfigure{\includegraphics[width=0.7\textwidth]{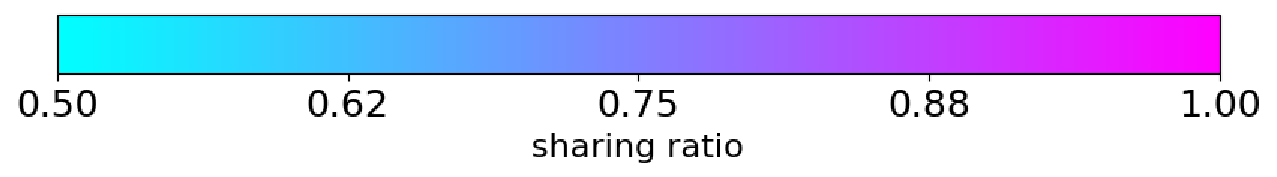}}
    \caption{Superimpositions of the three biggest clusters obtained by SCS in the first experimental instances. (a)--(c) show the results of TSP-100 and (d)--(f) depict the results of TSP-900. Numbers in subcaptions indicate cluster sizes. The color bar at the bottom shows the sharing ratio of edges in each cluster.}
    \label{fig:TSP-clusters}
\end{figure}
(a) covers about 55\% of all solutions, and other clusters are smaller than 1/10 of this size.
Comparing (a)--(c) reveals clusters with similar central small structures but slightly different outer big ones, indicating overall similarity with partial structural variations.
This trend becomes more pronounced in TSP-900.
While the largest cluster is similar to TSP-100, the second-largest cluster contains only less than 1/30 of the largest one, indicating a lack of clusters with distinctive structures.
The absence of large connected components suggests that the sampled solutions display uniform variation in their partial structures, consistent with our discussion in Section \ref{sec:results}.

Based on these findings, we discuss the practical utility of SCS or clustering strategies for TSP.
In TSP-100, we can observe that solutions exhibit high degrees of freedom in their internal structures except for small connected components, whereas outer ones are mainly similar with only minor differences.
This overview of local structures contributes to resolving Issue \ref{iss:suspect}.
Furthermore, identifying partial components provides detailed insights, such as recommending to adopt an entire component when the user prefers specific edges, which helps address Issue \ref{iss:blackbox}.
On the other hand, while TSP-900 similarly reveals partial components, their sizes are relatively small compared to the problem size, making it difficult to grasp overall trends from these figures.
In summary, SCS extracted local insights about variable co-occurrences and provided an overview of local optima for smaller problems.
However, for larger problems, the figures were not helpful for users due to the symmetrical nature of sampled solutions with no discernible structural bias.

Finally, SPP results are shown in Figure \ref{fig:SPP-clusters}.
\begin{figure}[H]
    \centering
    \footnotesize
    \subfigure[size--743]{\includegraphics[width=0.27\textwidth]{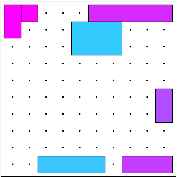}}
    \subfigure[size--44]{\includegraphics[width=0.27\textwidth]{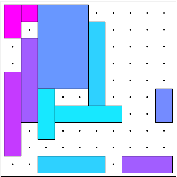}}
    \subfigure[size--34]{\includegraphics[width=0.27\textwidth]{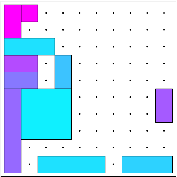}}\\
    \centering
    \subfigure[size--441]{\includegraphics[width=0.27\textwidth]{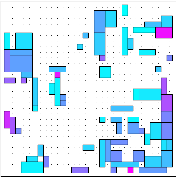}}
    \subfigure[size--23]{\includegraphics[width=0.27\textwidth]{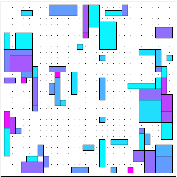}}
    \subfigure[size--16]{\includegraphics[width=0.27\textwidth]{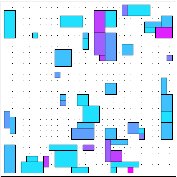}}\\
    \centering
    \subfigure{\includegraphics[width=0.7\textwidth]{cool_colormap_SPP.eps}}
    \caption{Superimpositions of the three biggest clusters obtained by SCS in the first experimental instances. (a)--(c) show the results of SPP-100 and (d)--(f) depict the results of SPP-900. Black points represent elements in the SPP, with rectangles indicating packing sets. Numbers in subcaptions indicate cluster sizes. The color bar at the bottom shows the sharing ratio of edges in each cluster.}
    \label{fig:SPP-clusters}
\end{figure}
(a) shows common structures of the solutions.
The others depict different ones: (b) has a $1\times5$ rectangle with a high sharing ratio positioned on the left, and (c) contains the same size rectangle but positioned one level lower.
Furthermore, the sharing ratio of these rectangles shows gradient variations.
Since solutions with fewer gaps tend to be local optimal, this pattern can represent the co-occurrence of elements, which addresses Issue \ref{iss:blackbox}.
In SPP-900, we can observe substructures or connected elements but the overall structure is unclear, similar to TSP-900.
Moreover, we observe gradient coloring patterns similar to those identified in SPP-100.
These findings may address Issues \ref{iss:not_opt}--\ref{iss:suspect} similar to SCS in TSP.
In conclusion, clustering solutions have unique advantages over existing methods for solving combinatorial optimization problems, which may address Issues \ref{iss:not_opt}--\ref{iss:suspect}.
While this paper deals with problems where cluster superimpositions can be visualized, the discussion can be extended to the general case by directly calculating sharing ratios of variable assignments.

\section{Conclusion}
\label{sec:con}
In this paper, we propose Sample-Cluster-Select (SCS) framework.
It addresses several challenges in obtaining diverse solution sets for combinatorial optimization problems.
This framework provides diverse solutions and valuable insights about the solution space through sampling and clustering solutions.
We conducted the numerical experiments with shortest path problems (SP), traveling salesman problems (TSP), and set packing problems (SPP).
These demonstrate both the capabilities and limitations of SCS.
SCS consistently outperformed existing methods for SP and large-scale SPP across various diversity measures.
While SCS did not exceed the performance of EA approaches for small-scale SPP instances and TSP problems, it consistently improved upon simple sampling methods.
A key advantage of SCS is its clustering step.
It can reveal structural patterns and relationships within solution spaces.
The visualization of clustered solutions provides several significant benefits, such as visualization of choices around representative solutions, insights into variable co-occurrences and local structures, and a view of the solution space, which increases user confidence in the presented options.
These features make SCS valuable in practical applications, even when the performance may be inferior to other heuristics.

One limitation of this work is that it is ineffective in problems with similar solution structures, such as large-scale TSP.
Developing specialized sampling techniques for such cases remains an open challenge for future research.

\bibliographystyle{plain}
\bibliography{cite}

\begin{thebibliography}{10}

\bibitem{Delorme2010}
Xavier Delorme, Xavier Gandibleux, and Fabien Degoutin.
\newblock Evolutionary, constructive and hybrid procedures for the bi-objective set packing problem.
\newblock {\em European Journal of Operational Research}, 204(2):206--217, 2010.

\bibitem{Dijkstra1959}
E.~W. Dijkstra.
\newblock A note on two problems in connexion with graphs.
\newblock {\em Numerische Mathematik}, 1(1):269--271, 1959.

\bibitem{Ester1996}
Martin Ester, Hans-Peter Kriegel, J\"{o}rg Sander, and Xiaowei Xu.
\newblock A density-based algorithm for discovering clusters in large spatial databases with noise.
\newblock In {\em Proceedings of the Second International Conference on Knowledge Discovery and Data Mining}, KDD'96, pages 226--231. AAAI Press, 1996.

\bibitem{Floyd1962}
Robert~W. Floyd.
\newblock Algorithm 97: Shortest path.
\newblock {\em Commun. ACM}, 5(6):345, Jun 1962.

\bibitem{George2020}
Tintu George and T.~Amudha.
\newblock Genetic algorithm based multi-objective optimization framework to solve traveling salesman problem.
\newblock In Harish Sharma, Kannan Govindan, Ramesh~C. Poonia, Sandeep Kumar, and Wael~M. El-Medany, editors, {\em Advances in Computing and Intelligent Systems}, pages 141--151, Singapore, 2020. Springer Singapore.

\bibitem{eigenweb}
Ga\"{e}l Guennebaud, Beno\^{i}t Jacob, et~al.
\newblock Eigen v3.
\newblock Retrieved from http://eigen.tuxfamily.org, Accessed May 10, 2025, 2010.

\bibitem{SciPyProceedings_11}
Aric~A. Hagberg, Daniel~A. Schult, and Pieter~J. Swart.
\newblock Exploring network structure, dynamics, and function using networkx.
\newblock In Ga\"el Varoquaux, Travis Vaught, and Jarrod Millman, editors, {\em Proceedings of the 7th Python in Science Conference}, pages 11--15, Pasadena, CA USA, 2008.

\bibitem{Hershberger2007}
John Hershberger, Matthew Maxel, and Subhash Suri.
\newblock Finding the k shortest simple paths: A new algorithm and its implementation.
\newblock {\em ACM Trans. Algorithms}, 3(4):45--es, nov 2007.

\bibitem{Huang2019}
Ting Huang, Yue-Jiao Gong, Sam Kwong, Hua Wang, and Jun Zhang.
\newblock A niching memetic algorithm for multi-solution traveling salesman problem.
\newblock {\em IEEE Transactions on Evolutionary Computation}, 24(3):508--522, 2019.

\bibitem{Hunter2007}
J.~D. Hunter.
\newblock Matplotlib: A 2d graphics environment.
\newblock {\em Computing in Science \& Engineering}, 9(3):90--95, 2007.

\bibitem{MacQueen1967}
J.~B. MacQueen.
\newblock Some methods for classification and analysis of multivariate observations.
\newblock In L.~M.~Le Cam and J.~Neyman, editors, {\em Proceedings of the fifth Berkeley Symposium on Mathematical Statistics and Probability}, volume~1, pages 281--297. University of California Press, 1967.

\bibitem{Maristany2021}
Pedro {Maristany de las Casas}, Antonio~Sede\ {n}o Noda, and Ralf Bornd\"{o}rfer.
\newblock An improved multiobjective shortest path algorithm.
\newblock {\em Computers \& Operations Research}, 135:105424, 2021.

\bibitem{Nikfarjam2021}
Adel Nikfarjam, Jakob Bossek, Aneta Neumann, and Frank Neumann.
\newblock Computing diverse sets of high quality tsp tours by eax-based evolutionary diversity optimisation.
\newblock In {\em Proceedings of the 16th ACM/SIGEVO Conference on Foundations of Genetic Algorithms}, FOGA '21, New York, NY, USA, 2021. Association for Computing Machinery.

\bibitem{Sharma2022}
Shubhkirti Sharma and Vijay Kumar.
\newblock A comprehensive review on multi-objective optimization techniques: Past, present and future.
\newblock {\em Archives of Computational Methods in Engineering}, 29(7):5605--5633, Nov 2022.

\bibitem{Solow1994}
Andrew~R. Solow and Stephen Polasky.
\newblock Measuring biological diversity.
\newblock {\em Environmental and Ecological Statistics}, 1(2):95--103, Jun 1994.

\bibitem{Ulrich2011}
Tamara Ulrich and Lothar Thiele.
\newblock Maximizing population diversity in single-objective optimization.
\newblock In {\em Proceedings of the 13th Annual Conference on Genetic and Evolutionary Computation}, GECCO '11, pages 641--648, New York, NY, USA, 2011. Association for Computing Machinery.

\bibitem{Uno2017}
Takeaki Uno, Hiroki Maegawa, Takanobu Nakahara, Yukinobu Hamuro, Ryo Yoshinaka, and Makoto Tatsuta.
\newblock Micro-clustering by data polishing.
\newblock In {\em 2017 IEEE International Conference on Big Data (Big Data)}, pages 1012--1018. IEEE, 2017.

\bibitem{Warshall1962}
Stephen Warshall.
\newblock A theorem on boolean matrices.
\newblock {\em J. ACM}, 9(1):11--12, Jan 1962.

\bibitem{Yen1970}
Jin~Y. Yen.
\newblock An algorithm for finding shortest routes from all source nodes to a given destination in general networks.
\newblock {\em Quarterly of Applied Mathematics}, 27(4):526--530, 01 1970.

\end{thebibliography}
\end{document}